\newtheorem{thm}{Theorem}
\newtheorem{prop}[thm]{Proposition}
\newtheorem{lem}[thm]{Lemma}
\newtheorem{cor}[thm]{Corollary}
\theoremstyle{definition}
\theoremstyle{remark}
\newcommand{\R}{\mathbb{R}}
\newcommand{\zz}[1]{\mathbb #1}
\newcommand{\dd}{\mathrm{d}}
\title[Critical Branching Symmetric Stable
Processes]{Maximal Displacement of Critical Branching Symmetric Stable
Processes}
\author{Steven P. Lalley}
\address{Department of Statistics, University of Chicago, Chicago, IL 60637}
\email{lalley@galton.uchicago.edu}
\urladdr{www.statistics.uchicago.edu/$\sim$lalley}
\author{Yuan Shao}
\address{Department of Mathematics, University of Chicago, Chicago, IL 60637}
\email{shaoyuan3319@gmail.com}
\date{\today}
\subjclass{Primary 60J80, secondary  60J15}
\thanks{First author supported by NSF grant DMS  -
1106669}
\keywords{branching stable process, critical branching process, 
nonlinear convolution equation, Feynman-Kac formula, fractional Laplacian}
\begin{document}

\begin{abstract}
We consider a critical continuous-time branching process (a Yule
process) in which the individuals independently execute symmetric
$\alpha-$stable random motions on the real line starting at their
birth points. Because the branching process is critical, it will
eventually die out, and so there is a well-defined maximal location
$M$ ever visited by an individual particle of the  process. We prove
that the distribution of $M$ satisfies the asymptotic relation
$P\{M\geq x \}\sim (2/\alpha)^{1/2}x^{-\alpha /2}$ as $x \rightarrow \infty$.
\end{abstract}

\maketitle

\section{Introduction and Main Result}

\subsection{Critical Branching Symmetric Stable Process}

The subject of  this paper is the \emph{critical
branching symmetric $\alpha-$stable process} (henceforth abbreviated
as ``CBSS process''), a critical branching process
in which each particle also moves in a one-dimensional space
according to a symmetric stable process. Formally, this is defined to be a
continuous-time stochastic particle system initiated by a single particle at
the origin $0 \in \R$  whose reproduction and dispersal mechanisms are
as follows:

(A) Each particle, independently of all others and of the past of the
process, waits an exponentially distributed time with parameter $1$,
and then either splits into two identical particles or dies with
probability $1/2$;

(B) While not branching, each particle moves in $\R$ following a
symmetric stable process of exponent $\alpha$,  independent of the
reproduction process.

Since the branching mechanism is  critical, the process will go
extinct in finite time, with probability one. Consequently, there is a
unique maximal real number $M\geq 0$, which we dub the  \emph{maximal
displacement} of the process, such that some particle of the process
reaches the location $M$. Our interest is in the tail of the
distribution of $M$. Let $Y_{t}$ ($Y$ for \emph{Yule}, as this is a
Yule process) be the total number of particles at
time $t$, and let $\zeta _{t,i}$ $(i=1,2,\cdots,Y_{t})$ be the locations of
the particles. Then the maximal displacement random variable is
formally defined by
\begin{equation*}
M=\max_{t \geq 0} M_t \quad \text{where} \quad M_t=\max_{i=1,2,\cdots,Y_{t}} \zeta _{t,i},
\end{equation*}
with the convention that the max of the empty set is $-\infty$.
The main result of the paper is the following theorem.

\begin{thm}\label{mainresult}
Let $M$ be the maximal displacement of a critical branching symmetric
stable process of exponent $\alpha$ ($0<\alpha<2$). Then
\begin{equation}\label{mainresultequation}
P \{ M \geq x \} \sim \sqrt{\frac{2}{\alpha}} \frac{1}{x^{\alpha /2}}
\quad \text{as $x \rightarrow \infty$.} 
\end{equation}
\end{thm}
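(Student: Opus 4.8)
The plan is to study $w(x):=P\{M\ge x\}$ for $x>0$ through the nonlinear renewal equation produced by the first fission/death event. Write $X$ for a single symmetric $\alpha$-stable path started at $0$, let $e_1$ be an independent $\mathrm{Exp}(1)$ clock, and set $\tau_x=\inf\{t:X_t\ge x\}$. Decomposing on whether $X$ reaches $[x,\infty)$ before $e_1$, and using the branching property together with spatial homogeneity, one obtains for every $x>0$
\begin{equation*}
w(x)=E\!\left[e^{-\tau_x}\right]+\tfrac12\,E\!\left[\int_0^{\tau_x}e^{-t}\bigl(2w(x-X_t)-w(x-X_t)^2\bigr)\,dt\right],
\end{equation*}
with the convention $w\equiv 1$ on $(-\infty,0]$; equivalently, $v:=1-w$ solves the nonlinear fractional equation $(-\Delta)^{\alpha/2}v=\tfrac12(1-v)^2$ on $(0,\infty)$ with $v=0$ on $(-\infty,0]$, i.e.\ $(-\Delta)^{\alpha/2}w=-\tfrac12 w^2$ on $(0,\infty)$. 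I would first record the elementary consequences: $w$ is nonincreasing with $w(0+)=1$ and $w(x)\downarrow 0$, and the displayed identity yields both the crude lower bound $w(x)\ge E[e^{-\tau_x}]$ and the crude upper bound $w(x)\le E[e^{-\tau_x}]+E[\int_0^{\tau_x}e^{-t}w(x-X_t)\,dt]$, whose right-hand sides are finite precisely because of the rate-one killing built into the branching.

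The second step is to identify the correct order of decay and the origin of the exponent $\alpha/2$. The decisive input is the fluctuation theory of the symmetric $\alpha$-stable process: its positivity parameter is $\rho=\tfrac12$, so the ascending ladder-height process is a stable subordinator of index $\alpha\rho=\alpha/2$ (killed when $\alpha<1$), its renewal function satisfies $V(x)\asymp x^{\alpha/2}$, and the harmonic function of the process killed on entering $(-\infty,0]$ vanishes at the boundary like $d(x)^{\alpha/2}$, while the associated half-line $1$-potential kernels inherit the same $\alpha/2$ scaling. Building sub- and super-solutions of the nonlinear equation out of $V$ and these potentials, and invoking the maximum principle for $(-\Delta)^{\alpha/2}$ on $(0,\infty)$, I would establish the a priori two-sided estimate $c_1x^{-\alpha/2}\le w(x)\le c_2x^{-\alpha/2}$. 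In particular $w(x)^2=O(x^{-\alpha})$, so the quadratic term in the renewal equation is of strictly smaller order than $w$ itself, and may be absorbed into an error term once the rough bounds are in hand.

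With these bounds available, the third step is linearization: subtracting the now-controlled quadratic term, $w$ satisfies
\begin{equation*}
w(x)=E\!\left[e^{-\tau_x}\right]+E\!\left[\int_0^{\tau_x}e^{-t}\,w(x-X_t)\,dt\right]+O(x^{-\alpha}),
\end{equation*}
a \emph{critical} linear renewal equation on the half-line: the convolution operator appearing here has norm $1-E[e^{-\tau_x}]$, which tends to $1$ as $x\to\infty$, so no contraction-mapping argument applies and the decay rate is governed by the rate at which the operator approaches criticality. Resolving it by a Wiener--Hopf factorization adapted to the $\alpha$-stable kernel --- equivalently, by a Tauberian theorem applied to the Mellin/Laplace transform of $w$ --- yields the exponent $\alpha/2$ together with the exact constant: the ladder-height index $\alpha/2$ produces the factor $\alpha^{-1/2}$ and the offspring law (mean $1$, second moment $2$, hence variance $1$) produces the $\sqrt 2$, so that $w(x)=\sqrt{2/\alpha}\,x^{-\alpha/2}+O(x^{-\alpha})=\bigl(1+o(1)\bigr)\sqrt{2/\alpha}\,x^{-\alpha/2}$, which is \eqref{mainresultequation}.

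The step I expect to be the main obstacle is the third one. Because the governing renewal operator sits exactly at the critical norm $1$, the sharp constant cannot be extracted by any soft fixed-point or comparison argument and requires a careful half-line (Wiener--Hopf / Tauberian) analysis of the fractional Laplacian, carried out simultaneously with uniform control of the nonlinear feedback so that the $O(x^{-\alpha})$ error is genuinely negligible against $x^{-\alpha/2}$. It is also at this stage that the qualitatively different regimes $\alpha<1$ (transient motion, killed ascending ladder process) and $\alpha\ge1$ (recurrent motion, unkilled ladder process) must be reconciled into a single asymptotic formula, which the rate-one killing inherited from the criticality of the branching mechanism is what makes possible.
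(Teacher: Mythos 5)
Your first step (the Duhamel/renewal form of the equation and its equivalent statement $(-\Delta)^{\alpha/2}u=-\tfrac12 u^2$ on $(0,\infty)$ with $u\equiv 1$ on $(-\infty,0]$) matches the paper, and the a priori bound $u(x)\asymp x^{-\alpha/2}$ is the right intermediate target. Your proposed route to that bound is shaky, though: the ladder-height renewal function $V(x)\asymp x^{\alpha/2}$ and the boundary harmonic function are \emph{increasing}, whereas you need a decaying barrier. The paper instead checks by a direct scaling computation that $w(x)=(1+x)^{-\alpha/2}$ (set to $1$ on the negative axis) satisfies $-(-\Delta)^{\alpha/2}w(x)\sim \alpha^{-1}x^{-\alpha}\asymp w(x)^2$, so suitable constant multiples of $w$ are sub- and supersolutions, and then applies a maximum-principle comparison. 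This part could be repaired.

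The genuine gap is your third step. The quadratic term cannot be ``absorbed into an error term'': the linearized renewal equation is exactly critical and admits the constant solution $h\equiv 1$ (indeed $E[e^{-\tau_x}]+E[\int_0^{\tau_x}e^{-t}\,dt]=1$), so the relevant resolvent $(I-T)^{-1}$ is unbounded and a forcing perturbation of size $x^{-\alpha}$ feeds back into the solution at size $x^{-\alpha/2}$ --- the same order as $u$ itself. Equivalently, the leading-order balance that produces both the exponent and the constant is the \emph{nonlinear} identity $\tfrac12 u(x)^2=-(-\Delta)^{\alpha/2}u(x)\sim\int_{-\infty}^{0}(x-y)^{-1-\alpha}\,dy=\alpha^{-1}x^{-\alpha}$: both sides are of order $x^{-\alpha}$, and discarding the left-hand side discards the answer. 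No Wiener--Hopf or Tauberian analysis of the linear kernel alone can recover $\sqrt{2/\alpha}$; your attribution of the factor $\sqrt2$ to the offspring variance via the linear equation has no derivation behind it. What is actually needed --- and what the paper supplies --- is (i) the Feynman--Kac representation $u(x)=E^x\exp\{-\tfrac12\int_0^{\tau^-_0}u(X_s)\,ds\}$, which keeps the nonlinearity on both sides with opposite effects, and (ii) a one-big-jump analysis showing that for large $x$ the expectation is dominated by paths that stay near $x$ until the first Poissonian jump of size about $-x$, which is exponentially distributed with rate $\sim x^{-\alpha}/\alpha$ and coincides with $\tau^-_0$; evaluating the resulting exponential functional gives $u(x)\sim(x^{-\alpha}/\alpha)/(u(x)/2+x^{-\alpha}/\alpha)$, hence $x^{\alpha}u(x)^2\to 2/\alpha$. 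Your proposal contains no substitute for this mechanism, and the linearization it does propose would lose the constant entirely.
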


\subsection{Discussion}\label{ssec:brw} Theorem~\ref{mainresult} is
the natural analogue for the CBSS process of a theorem describing the
maximal displacement of a critical driftless branching random walk,
recently proved by the authors in \cite{lalley-shao:1}. The critical
branching random walk is a discrete-time branching process in which
particles alternately reproduce and move, as follows. The reproduction
steps are governed by the law of a critical Galton-Watson process
whose offspring distribution has finite variance $\eta^{2}$ and finite
third moment; the movement steps are governed by the law of a
finite-variance, mean-zero random walk on the integers $\zz{Z}$. See
\cite{lalley-shao:1} or \cite{kesten} for further details on the
construction of the process. Since the branching mechanism is assumed
to be critical (that is, the offspring distribution has mean $1$), the
process dies out after finitely many generations, and hence there is a
well-defined maximal displacement random variable $M$, defined in the
same manner as for the CBSS process discussed above. The main result
of \cite{lalley-shao:1} states that if the step distribution of the
random walk component of the branching random walk has mean $0$,
variance $\sigma^{2}>0$, and finite $4+\varepsilon$ moment for some
$\varepsilon >0$, then as $x \rightarrow \infty$,
\begin{equation}\label{eq:brwAsymptotics}
	P\{M\geq x \}\sim \frac{6\eta^{2}}{\sigma^{2}x^{2}}.
\end{equation}

The result \eqref{eq:brwAsymptotics} is itself the natural extension
to branching random walks of an earlier result of Sawyer \& Fleischman
\cite{fleischman-sawyer} for critical branching Brownian
motion.\footnote{See also \cite{lee}. The paper of Sawyer and
Fleischman proposed the critical branching Brownian motion as a model
for the dispersal of a mutant but neutral allele in a homogeneous
environment. Branching random walks and branching diffusion processes
are also used as models in combustion and reaction-diffusion
processes, and they occur as low-density limits of certain spatial
epidemics \cite{lalley:spatial}. In all of these situations the
maximal displacement is of natural interest.} For branching Brownian
motion it is possible (and not difficult) to write a second-order
ordinary differential equation for the distribution function of $M$;
the tail asymptotics of solutions can then be obtained by relatively
standard methods in ODE theory. See \cite{fleischman-sawyer} for
details. For branching random walk, it is also quite easy to write a
nonlinear convolution equation for the distribution function
(cf. \cite{lalley-shao:1}, \cite{kesten}), but ODE methods cannot be
used to determine tail asymptotics. (See the discussion on p.~924 of
\cite{kesten}, in particular his eqn. 1.9, where the tail asymptotics
is left as an open problem.) The primary technical contribution of
\cite{lalley-shao:1} is a new method, based on Feynman-Kac formulas, for
the analysis of such nonlinear convolution equations.  The bulk of
this paper will be devoted to a parallel method for studying the
asymptotic behavior of solutions to certain \emph{pseudo-differential}
equations that will be shown to govern the distribution function of
the maximal displacement random variable for the CBSS process.

\subsection{Heuristics}\label{ssec:heuristics} The following heuristic
arguments suggest that $x^{-\alpha /2}$ is the correct order of
magnitude for the tail probability \eqref{mainresult}. Fix a large
time $T$ and consider the event that the branching process survives to
time $T$: by Kolmogorov's theorem for critical branching processes
(cf. \cite{athreya-ney}, ch.~1), the chance of this is on the order of
$1/T$. Furthermore, by Yaglom's theorem, if the branching process
survives to time $T$ then at typical times $t\in [\varepsilon T,T]$
the number of particles alive will be on the order of $T$. Thus, the
total ``particle-time'' will be on the order of $T^{2}$. Now in each
small interval $(\Delta t)$ of time, each particle has a chance
$(\Delta t) T^{-2}$ of jumping a distance more than $T^{2/\alpha}$ to
the right (by the Poisson point process representation of the
symmetric $\alpha-$stable process: see section~\ref{sec:preliminaries}
below). Since the total particle-time is on the order of $T^{2}$, it
follows that the conditional probability that some particle makes it
past location $T^{2/\alpha}$ is on the order $O (1)$. Thus, the
\emph{unconditional} probability is on the order of $1/T$, since this
is the probability that the process survives to time $T$. A similar
argument shows that unless the process survives for significantly
longer than time $T$ then the chance that a particle moves much
farther right than $T^{2/\alpha}$ is negligible.

These heuristics can, with some care (see
section~\ref{sectionaprioribound} below), be made into rigorous
arguments to prove that
\begin{equation}\label{eq:orderOfMagnitude}
	P\{M\geq x \}\asymp x^{-\alpha /2},
\end{equation}
but there is little hope of obtaining the sharp asymptotic formula
\eqref{mainresult} by similar methods. (See \cite{kesten} for a
detailed analysis of such arguments in the case of a critical,
driftless branching random walk.) The rough asymptotic formula
\eqref{eq:orderOfMagnitude} will be a necessary preliminary step in
proving the sharper result \eqref{mainresult} (see
Proposition~\ref{aprioribound} in
section~\ref{sectionaprioribound}), but \eqref{mainresult} will also
require the use of different tools based on Feynman-Kac formulas.

\subsection{Superprocess limits}\label{ssec:superprocessLims} The
asymptotic relation \eqref{eq:brwAsymptotics} is shown in
\cite{lalley-shao:1} to be closely related to the Dawson-Watanabe
scaling limit (super-Brownian motion) for critical branching random
walks. There is a similar relation between the asymptotic formula
\eqref{mainresult} for the CBSS process and the superprocess for the
symmetric $\alpha-$stable process. (See, e.g., \cite{legall}, ch.~2
for an introduction to the basic theory of these superprocesses.) In
brief, if $n$ independent copies of the CBSS are all started at time
$0$ at the origin, if time and mass are scaled by $n$ and space is
scaled by $n^{\alpha /2}$ then the resulting measure-valued process
becomes the symmetric $\alpha-$stable superprocess in the $n
\rightarrow \infty$ limit. This scaling is consistent with
\eqref{mainresult}, because by Kolmogorov's theorem, among the $n$
branching processes the number that survive to time $n$ is
(approximately) Poisson with mean $1$, and so \eqref{mainresult}
suggests that $n^{\alpha /2}$ is the right scaling of space for the
superprocess limit. Of course, \eqref{mainresult} cannot be deduced
from the existence of the superprocess limit, because the maximum
location visited might be determined by a small $o (n)$ number of
particles that drift away from the bulk of the mass. In fact, the
result \eqref{mainresult} can be interpreted as asserting that this
does not happen. See \cite{lalley-shao:1} for an extended discussion of
the analogous point for the finite variance case.

\subsection{Plan of the paper}\label{ssec:plan}
Theorem~\ref{mainresult} will be proved by first showing that the
distribution function of $M$ satisfies a pseudo-differential equation
\eqref{initialvalueproblem} involving the fractional Laplacian
operator. This will be done in section~\ref{section:pseudo-DE}. A
comparison principle for solutions to the pseudo-differential equation
will be proved in section~\ref{sectionaprioribound}, and this will be
used to prove the \emph{a priori} estimates
\eqref{eq:orderOfMagnitude}. Finally, in
section~\ref{sectionfeynmankac}, a Feynman-Kac representation of
solutions to the pseudo-differential equation
\eqref{initialvalueproblem} will be used to obtain sharp asymptotics.
The Feynman-Kac representation will involve path integrals of the
symmetric $\alpha-$stable process, and in analyzing these it will be
necessary to call on some structural features of these processes: the
relevant facts are collected in section~\ref{sec:preliminaries}.

\section{Preliminaries on Symmetric Stable Processes}
\label{sec:preliminaries}

Recall \cite{bertoin} that a \textit{symmetric $\alpha -$stable
process} in $\R$ is a real-valued L\'evy process $\{ X_t \} _{t \geq
0}$ whose distribution $X_t$ is symmetric (i.e. $X_t$ has the same
distribution as $-X_t$) for any $t \geq 0$, and satisfies the scaling
property
\begin{equation}\label{scalingproperty}
\frac{X_t}{t^{1/\alpha}} \overset{\mathscr{D}}{=} X_1 \qquad \forall t>0.
\end{equation}
Henceforth, we shall reserve the symbol $X$ for a symmetric
$\alpha-$stable process, and we shall use the usual convention of
attaching a superscript $x$ to the probability and expectation
operators $E^{x},P^{x}$ to denote that under $P^{x}$ the 
process $X_{t}$ has initial value $X_{0}=x$. When the superscript $x$
is omitted, it should be understood that $x=0$.

The characteristic function of a symmetric $\alpha-$stable process has
the form $Ee^{i \theta X(t)} = \exp(-\gamma t |\theta|^{\alpha})$ for
some constant $\gamma > 0$.  This is clearly integrable, and so it
follows by the Fourier inversion theorem that the distribution of
$X_{t}$ has a density $f_{t} (x)$ with respect to Lebesgue measure
$dx$. We shall assume time is scaled so that $\gamma =1$, and we shall
only consider the case $\alpha <2$. For such $\alpha$, the symmetric
stable process is a pure jump process and has a Poisson point process
representation
\begin{equation}\label{pointprocessrepresentation}
%\begin{aligned}
X(t) %& = \sum_{(T_i,Y_i) \in \mathscr{P}} Y_i \mathds{1} \{T_i \leq t \}\\
 = \iint_{\mathscr{X}} y \mathds{1}_{(0,t]}(s) \, N(\dd s,\dd y),
%\end{aligned}
\end{equation}
where $N (ds, dy)$ is a Poisson random measure with intensity $\mu$
given by
\begin{align}\label{intensitymeasure}
\mu(\dd t,\dd y) &= \dd t \cdot \lambda(\dd y)  \quad \text{with L\'{e}vy measure} \\
\notag 
\lambda(\dd y) &= |y|^{-1-\alpha} \dd y \quad \text{on $\R$}.
\end{align}

The infinitesimal generator of a symmetric $\alpha-$stable  process is
(see \cite{bertoin}, page 24) the \textit{fractional Laplacian
pseudo-differential operator} $-(-\Delta)^{\alpha/2}$. Thus, if $X_t$
is symmetric $\alpha-$stable and $f: \R \rightarrow \R$ is a
suitable function (for example, a compactly supported smooth function),
then
\begin{equation}\label{infinitesimalgenerator}
\lim_{t \rightarrow 0+} \frac{E^x[f(X_t)]-f(x)}{t} = -(-\Delta)^{\alpha/2} f(x),
\end{equation}
where $(-\Delta)^{\alpha/2}$ is the non-local linear operator defined by
the singular integral
\begin{equation}\label{fractionallaplacian}
(-\Delta)^{\alpha/2} f(x) := \int_{-\infty}^{\infty} \frac{f(x)-f(y)}{|x-y|^{1+\alpha}} \,\dd y.
\end{equation}
The domain of this operator is understood to be the set of all
bounded, continuous functions such that the limit
\eqref{infinitesimalgenerator} exists. 

Several elementary first-passage properties of the symmetric $\alpha
-$stable process $\{X_{t} \}_{t\geq 0}$ will be used repeatedly in the
analysis that follows. First, by the Hewitt-Savage 0--1 Law, the
limsup and liminf of any sample path are $\pm \infty$, and so for any
$A>0>B$ the first-passage times
\begin{align}\label{eq:fpt}
	\tau^{+}_{A}&=\tau^{+} (A)=\inf \{t>0 \,:\, X_{t}\geq A\} \quad \text{and}\\
\notag \tau^{-}_{B}&=\tau^{-} (B) =\inf \{t>0 \,:\, X_{t}\leq B\}
\end{align}
are almost surely finite. Note that by symmetry the random variables
$\tau^{+} (A)$ and $\tau^{-} (-A)$ have the same distribution, and
similarly so do $X_{\tau^{+} (A)}$ and $-X_{\tau^{-} (-A)}$.
Second, by the scaling law, for any $A>0$
the joint distribution of
$(\tau^{+} (A) /A^{\alpha},X_{\tau^{+} (A)}/A)$ is identical to that of
$(\tau^{+} (1),X_{\tau^{+} (1)})$. Third, there is the following
analogue of the ``reflection principle'' for Brownian motion.

\begin{lem}\label{reflectionprinciple}
Let $X_t$ be a symmetric $\alpha$-stable process with $X_0=0$, and
define 
\begin{equation}\label{eq:maxDef}
	 X_t^* := \max_{s \in [0,t]} X_s.
\end{equation}
Then for any $y>0$, $$P \{ X_t^* \geq y \} \leq 2 P \{ X_t \geq y \}.$$
\end{lem}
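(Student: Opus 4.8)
The plan is to adapt the classical reflection argument for Brownian motion, replacing the role of path-continuity by the symmetry of the increments of $X$ and invoking the strong Markov property at the first-passage time $\tau^{+}_{y}=\tau^{+}(y)$ from \eqref{eq:fpt}. First I would observe that, since $X$ has right-continuous paths, the events $\{X_{t}^{*}\geq y\}$ and $\{\tau^{+}_{y}\leq t\}$ coincide: if $X_{t}^{*}\geq y$ then $X_{s}\geq y$ for some $s\leq t$, so $\tau^{+}_{y}\leq t$; conversely, on $\{\tau^{+}_{y}\leq t\}$ right-continuity forces $X_{\tau^{+}_{y}}\geq y$ (choose $t_{n}\downarrow\tau^{+}_{y}$ with $X_{t_{n}}\geq y$ and pass to the limit), and hence $X_{t}^{*}\geq X_{\tau^{+}_{y}}\geq y$.

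Next I would bound $\{X_{t}\geq y\}$ from below. Since $X_{\tau^{+}_{y}}\geq y$ on $\{\tau^{+}_{y}\leq t\}$, we have the inclusion $\{\tau^{+}_{y}\leq t\}\cap\{X_{t}\geq X_{\tau^{+}_{y}}\}\subseteq\{X_{t}\geq y\}$, so $P\{X_{t}\geq y\}\geq P\{\tau^{+}_{y}\leq t,\ X_{t}\geq X_{\tau^{+}_{y}}\}$. Now $\tau^{+}_{y}$ is a stopping time, and the strong Markov property of the L\'evy process $X$ says that, conditionally on $\mathcal{F}_{\tau^{+}_{y}}$ on the event $\{\tau^{+}_{y}<\infty\}$, the shifted process $(X_{\tau^{+}_{y}+u}-X_{\tau^{+}_{y}})_{u\geq 0}$ is again a symmetric $\alpha$-stable process started at $0$, independent of $\mathcal{F}_{\tau^{+}_{y}}$. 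In particular, on $\{\tau^{+}_{y}=s\leq t\}$ the increment $X_{t}-X_{\tau^{+}_{y}}$ has the law of $X_{t-s}$, which is symmetric about $0$; hence its conditional probability of being nonnegative is at least $1/2$, because $P\{X_{t-s}\geq 0\}=P\{X_{t-s}\leq 0\}$ and these two probabilities sum to at least $1$.

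Putting the pieces together and conditioning on $\mathcal{F}_{\tau^{+}_{y}}$ (using that $\mathds{1}\{\tau^{+}_{y}\leq t\}$ is $\mathcal{F}_{\tau^{+}_{y}}$-measurable),
\[
P\{X_{t}\geq y\}\ \geq\ E\!\left[\mathds{1}\{\tau^{+}_{y}\leq t\}\,P\{X_{t}\geq X_{\tau^{+}_{y}}\mid\mathcal{F}_{\tau^{+}_{y}}\}\right]\ \geq\ \frac{1}{2}\,P\{\tau^{+}_{y}\leq t\}\ =\ \frac{1}{2}\,P\{X_{t}^{*}\geq y\},
\]
which is the assertion after multiplying by $2$. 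I do not expect any genuine obstacle. The one point worth flagging is that, unlike Brownian motion, $X$ typically jumps strictly past the level $y$ at time $\tau^{+}_{y}$, so one only obtains the one-sided estimate $P\{X_{t}-X_{\tau^{+}_{y}}\geq 0\mid\mathcal{F}_{\tau^{+}_{y}}\}\geq 1/2$ rather than an exact equality; but this is precisely the direction needed, and the overshoot $X_{\tau^{+}_{y}}-y\geq 0$ only reinforces the inclusion above. (One could alternatively run the argument on the whole line using the a.s.\ finiteness of $\tau^{+}_{y}$ recorded before the statement, but only the event $\{\tau^{+}_{y}\leq t\}$ actually enters.)
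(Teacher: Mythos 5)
Your argument is correct and is essentially the same reflection argument the paper uses: both rest on the strong Markov property at $\tau^{+}_{y}$ together with the symmetry of the post-$\tau^{+}_{y}$ increments, the only difference being that you phrase it as the conditional bound $P\{X_{t}-X_{\tau^{+}_{y}}\geq 0\mid\mathcal{F}_{\tau^{+}_{y}}\}\geq 1/2$ while the paper constructs the reflected process $\tilde{X}_{t}=2X_{\tau}-X_{t}$ explicitly. No gaps; your remark about the overshoot only helping the inequality is exactly the right point to flag.
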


\begin{proof}
Fix $y>0$, and abbreviate $\tau =\tau_{y}$. Then
$X_\tau \geq y$, and by the strong Markov property,
\begin{equation*}
\tilde{X}_t :=
\begin{cases}
X_t \quad & \text{when $t<\tau$},\\
2X_\tau - X_t & \text{when $t \geq \tau$}
\end{cases}
\end{equation*}
is a symmetric $\alpha$-stable process as well. Hence
\begin{equation*}
\begin{aligned}
P \{ X_t^* \geq y \} & = P \{ X_t^* \geq y, X_t \geq y \} + P \{ X_t^* \geq y, X_t < y \}\\
& = P \{ X_t \geq y \} + P \{ \tau < t, \tilde{X}_t > 2X_\tau-y \}\\
& \leq P \{ X_t \geq y \} + P \{ \tilde{X}_t > y \}\\
& = 2 P \{ X_t \geq y \}.
\end{aligned}
\end{equation*}
\end{proof}

Some of the arguments in sections \ref{section:pseudo-DE} and
\ref{sectionfeynmankac} will require estimates on first-passage
probabilities on very short time intervals. For these, the following
asymptotic formulas will be useful.

\begin{lem}\label{lemma:shortTimeAsymptotics}
For any interval $J\subset \zz{R}$ let $\sigma_{J}$ be the first exit
time from the interval $J$.
For any fixed $0<\delta <A/2$, as $\varepsilon \rightarrow 0$,
\begin{gather}\label{eq:shortTimeAsymptotics}
	\lim_{\varepsilon \rightarrow 0}\varepsilon^{-1}P (\tau^{+}
	_{A}<\varepsilon)=  \lambda [A,\infty) 	\quad \text{and} 
	 \\
\label{eq:shortTimeAsymptoticsB}
	\lim_{\varepsilon \rightarrow 0} P (\tau^{+}_{A}\not
	 =\sigma_{(-\delta ,\delta)}\,|\, \tau^{+}_{A} <\varepsilon
	 )=0 .
\end{gather}
\end{lem}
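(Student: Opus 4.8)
The plan is to isolate the large jumps of $X$. Fix a small threshold $\eta$ with $0<\eta<\delta/2$, to be sent to $0$ at the end, and use the point-process representation \eqref{pointprocessrepresentation} to split $X=X^{(\eta)}+J^{(\eta)}$, where $J^{(\eta)}$ collects the jumps of absolute size exceeding $\eta$ and $X^{(\eta)}$ collects the rest. These two processes are independent; $J^{(\eta)}$ is compound Poisson with jump rate $\lambda_\eta:=\lambda\{|y|>\eta\}=(2/\alpha)\eta^{-\alpha}$, and $X^{(\eta)}$ is a symmetric L\'evy process, hence a martingale, whose jumps are bounded by $\eta$. On a time interval of length $\varepsilon$, $J^{(\eta)}$ has at least two jumps with probability $O(\varepsilon^{2}\lambda_\eta^{2})=o(\varepsilon)$, so I may restrict to the cases of zero or one large jump.

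The analytic core is the short-time bound
\[
	P\bigl(\sup\nolimits_{s\le\varepsilon}|X^{(\eta)}_{s}|\ge c\eta\bigr)=o(\varepsilon)\qquad\text{for each fixed }c>1,
\]
which I would get from the exponential martingale $\exp\{\theta X^{(\eta)}_{s}-s\psi_\eta(\theta)\}$, where $\psi_\eta(\theta)=\int_{|y|\le\eta}(e^{\theta y}-1-\theta y)|y|^{-1-\alpha}\,\dd y\ge 0$: by Doob's maximal inequality and the symmetry of $X^{(\eta)}$, $P(\sup_{s\le\varepsilon}|X^{(\eta)}_{s}|\ge a)\le 2\exp\{-\theta a+\varepsilon\psi_\eta(\theta)\}$ for every $\theta>0$. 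The substitution $y=\eta u$ yields the scaling identity $\psi_\eta(\theta)=\eta^{-\alpha}g(\eta\theta)$ with $g(\beta)=\int_{-1}^{1}(e^{\beta u}-1-\beta u)|u|^{-1-\alpha}\,\dd u<\infty$ and $g(\beta)=O(e^{\beta}/\beta)$ as $\beta\to\infty$; taking $\theta=\eta^{-1}\log(1/\varepsilon)$ then keeps $\varepsilon\psi_\eta(\theta)$ bounded while $\theta a=(a/\eta)\log(1/\varepsilon)$, giving the bound $O(\varepsilon^{a/\eta})$, which is $o(\varepsilon)$ exactly because $a>\eta$. This is the step I expect to be the main obstacle: a crude Gaussian tail estimate is not available, since $X^{(\eta)}$ still carries jumps as large as $\eta$, so one must track the $\eta$-dependence of $\psi_\eta$ and tune $\theta$ on the scale $\eta^{-1}\log(1/\varepsilon)$.

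Granting this, \eqref{eq:shortTimeAsymptotics} follows by a sandwich. After discarding the $o(\varepsilon)$-probability events ($\ge 2$ large jumps; no large jump but $\sup_{s\le\varepsilon}X^{(\eta)}_{s}\ge A$; one large jump but $\sup_{s\le\varepsilon}|X^{(\eta)}_{s}|\ge 2\eta$) we are left with the event $G_\varepsilon$ that $J^{(\eta)}$ has exactly one jump, of size $y$ at a time $u<\varepsilon$, and $\sup_{s\le\varepsilon}|X^{(\eta)}_{s}|<2\eta$. On $G_\varepsilon$ one has $X_{s}<2\eta<A$ for $s<u$ and $X_{s}=X^{(\eta)}_{s}+y$ for $s\ge u$, so $\{\tau^{+}_{A}<\varepsilon\}\cap G_\varepsilon\subseteq G_\varepsilon\cap\{y>A-2\eta\}$ while $G_\varepsilon\cap\{y>A+2\eta\}\subseteq\{\tau^{+}_{A}<\varepsilon\}$. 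Since the number of jumps of $J^{(\eta)}$ exceeding a level $b$ during $(0,\varepsilon]$ is Poisson with mean $\varepsilon\,\lambda[b,\infty)$, and $P(G_\varepsilon^{c})=o(\varepsilon)$, these inclusions give $\lambda(A+2\eta,\infty)\le\liminf_{\varepsilon\to 0}\varepsilon^{-1}P(\tau^{+}_{A}<\varepsilon)\le\limsup_{\varepsilon\to 0}\varepsilon^{-1}P(\tau^{+}_{A}<\varepsilon)\le\lambda(A-2\eta,\infty)$; letting $\eta\to 0$ gives the limit $\lambda[A,\infty)$, which is \eqref{eq:shortTimeAsymptotics}.

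For \eqref{eq:shortTimeAsymptoticsB} I would use the same event $G_\varepsilon$ with the further restriction $y>A+2\eta$. On this refined event the path stays in $(-2\eta,2\eta)\subset(-\delta,\delta)$ for $s<u$ and then jumps to $X_{u}=X^{(\eta)}_{u^{-}}+y>A>\delta$; hence $u$ is the first exit time of $X$ from $(-\delta,\delta)$ and also equals $\tau^{+}_{A}$, so $\tau^{+}_{A}=\sigma_{(-\delta,\delta)}$ there. The part of $\{\tau^{+}_{A}<\varepsilon\}$ not covered by the refined event lies in $G_\varepsilon^{c}\cup\bigl(G_\varepsilon\cap\{y\in(A-2\eta,A+2\eta]\}\bigr)$, of total probability at most $o(\varepsilon)+\varepsilon\,\lambda(A-2\eta,A+2\eta]=o(\varepsilon)+O(\varepsilon\eta)$. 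Dividing by $P(\tau^{+}_{A}<\varepsilon)\sim\varepsilon\,\lambda[A,\infty)$ from the preceding paragraph and letting $\varepsilon\to 0$ and then $\eta\to 0$ gives $P(\tau^{+}_{A}\ne\sigma_{(-\delta,\delta)}\mid\tau^{+}_{A}<\varepsilon)\to 0$.
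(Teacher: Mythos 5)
Your proof is correct, but it takes a genuinely different route from the paper's. Both arguments come down to the same ``one big jump'' sandwich $\lambda(A+c,\infty)\le\liminf_{\varepsilon\to0}\varepsilon^{-1}P(\tau^{+}_{A}<\varepsilon)\le\limsup_{\varepsilon\to0}\varepsilon^{-1}P(\tau^{+}_{A}<\varepsilon)\le\lambda(A-c,\infty)$ with $c\to0$, but the mechanisms for showing that everything else is $o(\varepsilon)$ differ. The paper works directly with the process: it combines the reflection principle (Lemma~\ref{reflectionprinciple}), the scaling law, and the regular variation of the stable tail to get $P(\tau^{+}_{\delta}<\varepsilon)=O(\varepsilon)$, and then invokes the strong Markov property to conclude that two successive first passages of size $\delta$ within time $\varepsilon$ cost $O(\varepsilon^{2})$; this is shorter but leaves the identification of the exit event with a single Poisson point somewhat implicit. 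You instead perform a L\'evy--It\^o truncation at level $\eta$ and control the small-jump martingale $X^{(\eta)}$ via an exponential supermartingale and Doob's inequality, with $\theta$ correctly tuned on the scale $\eta^{-1}\log(1/\varepsilon)$ so as to land in the Poissonian tail regime $\varepsilon^{a/\eta}$ rather than a (false) Gaussian one; your scaling identity $\psi_{\eta}(\theta)=\eta^{-\alpha}g(\eta\theta)$ and the growth bound on $g$ check out. The payoff is a more quantitative, self-contained argument that needs neither the reflection principle nor the tail asymptotics of the stable density. One phrasing should be repaired: $P(G_{\varepsilon}^{c})$ is certainly not $o(\varepsilon)$ (with probability close to $1$ there is no large jump at all); what you have actually shown, and what both bounds of the sandwich require, is $P(G_{\varepsilon}^{c}\cap\{\tau^{+}_{A}<\varepsilon\})=o(\varepsilon)$ together with $P(G_{\varepsilon}\cap\{y>b\})=\varepsilon\lambda(b,\infty)+o(\varepsilon)$, both of which follow from the three discarded events you list in the preceding sentence.
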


\begin{proof}
By Lemma~\ref{reflectionprinciple} and the scaling law, for any
fixed $A>0$,
\begin{align*}
	P \{ \tau^{+} (A)<\varepsilon\}&\leq 2 P\{X_{\varepsilon}\geq A \}\\
	  &=P\{X_{1}\geq A\varepsilon^{-1/\alpha } \}\\
	  &\sim \kappa A^{\alpha}\varepsilon 
\end{align*}
where $\kappa >0$ is a constant that depends on the exponent
$\alpha$. (See, e.g., \cite{zolotarev}, p. 95, or \cite{feller2},
sec.~XVII.6 for the fact that the tail of the $\alpha -$stable law is
regularly varying with exponent $\alpha$.)  It follows by symmetry
that $P\{\sigma_{(-A,A)}<\varepsilon \}\sim 2\kappa
A^{\alpha}\varepsilon$. Consequently, for any
(small) $\delta >0$,
\[
	P (\tau^{+} (A)<\varepsilon \;\text{and}\;
	\tau^{+}_{A}\not =\sigma_{(-\delta ,\delta )}) =O (\varepsilon^{2}) 
\]
as $\varepsilon  \rightarrow 0$, because the event would require the
process to make two successive first passages of size $\delta$ before
time $\varepsilon$. Thus, the event $\tau^{+} (A)<\varepsilon$ is nearly
entirely accounted for by sample paths that make a single jump of size
$>A-2\delta$ before time $\varepsilon$; in particular, for any $\delta
>0$, as $\varepsilon  \rightarrow 0$
\begin{align*}
	P (\tau^{+} (A)<\varepsilon )&\geq P (N ([0,\varepsilon]\times
	[A+2\delta ,\infty])\geq 1)+O (\varepsilon^{2}) \quad \text{and}\\
	P (\tau^{+} (A)<\varepsilon )&\leq P (N ([0,\varepsilon]\times
	[A-2\delta ,\infty])\geq 1)+O (\varepsilon^{2}).
\end{align*}
Since $\delta >0$ can be chosen arbitrarily small, relations
\eqref{eq:shortTimeAsymptotics}--\eqref{eq:shortTimeAsymptoticsB}
follow. A similar argument proves \eqref{eq:shortTimeAsymptoticsC}.  
\end{proof}

Lemma~\ref{lemma:shortTimeAsymptotics} indicates that when the
first-passage time $\tau^{+} (A)$ is very small it is because the 
path makes a single jump of size $\geq A$ at time $\tau^{+} (A)$. The
following lemma -- a consequence of the Poisson point process
representation of the stable process -- asserts that the size of this
jump is independent of the path up to the jump time. 
For any interval $J\subset \zz{R}$, define $\nu _{J}=\nu  (J)$ to be the
first time that $X_{\nu  (J)}-X_{\nu  (J)-}\in J$, equivalently,
\begin{equation}\label{eq:firstJumpTime}
	\nu_{J}=\inf \{t>0 \,:\, N (t,J)=1\}.
\end{equation}

\begin{lem}\label{lemma:jumpIndependence}
Let $J\subset \zz{R}$ be a nonempty, open interval such that $0$ is in
the interior of $J^{c}$. Then the jump size $X_{\nu
(J)}-X_{\nu (J)-}$ is independent of $\mathcal{F}_{\nu (J)-}$.
\end{lem}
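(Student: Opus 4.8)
The plan is to split the driving Poisson random measure $N$ into independent pieces according to whether a jump lands in $J$. Write $N_{J}$ for the restriction of $N(\dd s,\dd y)$ to $(0,\infty)\times J$ and $N_{J^{c}}$ for its restriction to $(0,\infty)\times J^{c}$; by the restriction property of Poisson random measures, $N_{J}$ and $N_{J^{c}}$ are independent. Since $0\in\mathrm{int}(J^{c})$, the interval $J$ is bounded away from $0$, so $\lambda(J)=\int_{J}|y|^{-1-\alpha}\,\dd y<\infty$ and $N_{J}$ has finite intensity rate $\lambda(J)$ in the time variable; enumerating its points in increasing order of time as $(t_{1},\Delta_{1}),(t_{2},\Delta_{2}),\dots$, the standard marking description of a Poisson process shows that $0<t_{1}<t_{2}<\cdots$ is a rate-$\lambda(J)$ Poisson process and the marks $\Delta_{1},\Delta_{2},\dots$ are i.i.d.\ with law $\lambda(\,\cdot\cap J)/\lambda(J)$, independent of $(t_{i})_{i\geq 1}$. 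In particular $\nu(J)=t_{1}$, the jump size $X_{\nu(J)}-X_{\nu(J)-}$ equals $\Delta_{1}$, the variable $\Delta_{1}$ is independent of $\nu(J)$, and the pair $(\nu(J),\Delta_{1})$, being a measurable function of $N_{J}$, is independent of $N_{J^{c}}$.

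The second step is to show that $\mathcal{F}_{\nu(J)-}\subseteq\sigma(\nu(J))\vee\sigma(N_{J^{c}})=:\mathcal{G}$. Because $X$ is recovered from $N$ through \eqref{pointprocessrepresentation} (with the principal-value convention when $\alpha\geq 1$, under which symmetry makes the drift compensation vanish, so that $X$ splits additively as $X^{J}(t)+X^{J^{c}}(t)$ with $X^{J}$ a function of $N_{J}$ and $X^{J^{c}}$ a function of $N_{J^{c}}$), while conversely the jumps of $X$ on $(0,t]$ recover $N$ on $(0,t]\times\zz{R}$, the natural filtration of $X$ satisfies $\mathcal{F}_{t}=\sigma\big(N((0,s]\times B):s\leq t,\ B\in\mathcal{B}(\zz{R})\big)$. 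Now $\mathcal{F}_{\nu(J)-}$ is generated by $\mathcal{F}_{0}$ together with events $A\cap\{t<\nu(J)\}$ with $A\in\mathcal{F}_{t}$; on $\{t<\nu(J)\}$ there are no $J$-jumps in $(0,t]$, so $N$ and $N_{J^{c}}$ agree on $(0,t]\times\zz{R}$ and $A$ may be chosen in $\sigma(N_{J^{c}})$, while $\{t<\nu(J)\}\in\sigma(\nu(J))$. Hence $\mathcal{F}_{\nu(J)-}\subseteq\mathcal{G}$.

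Combining the two steps finishes the proof. For bounded measurable $g$ it suffices to check independence of $\Delta_{1}=X_{\nu(J)}-X_{\nu(J)-}$ against a generating $\pi$-system for $\mathcal{G}$, say sets $G=\{\nu(J)\in I\}\cap\{N_{J^{c}}\in C\}$: independence of $(\nu(J),\Delta_{1})$ from $N_{J^{c}}$ gives $E[g(\Delta_{1})\mathds{1}_{G}]=E[g(\Delta_{1})\mathds{1}_{\{\nu(J)\in I\}}]\,P(N_{J^{c}}\in C)$, and then independence of $\Delta_{1}$ from $\nu(J)$ factors the right side as $E[g(\Delta_{1})]\,P(\nu(J)\in I)\,P(N_{J^{c}}\in C)=E[g(\Delta_{1})]\,P(G)$. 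Since $\mathcal{F}_{\nu(J)-}\subseteq\mathcal{G}$, the jump size is independent of $\mathcal{F}_{\nu(J)-}$.

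The only step that genuinely needs care is the identification of $\mathcal{F}_{\nu(J)-}$ in the second paragraph — in particular, verifying that on $\{t<\nu(J)\}$ every event in the pre-$\nu(J)$ history is measurable with respect to $N_{J^{c}}$ alone; once this is granted, the rest is just the restriction/marking structure of Poisson processes. Note also that nothing here depends on $J$ being bounded: the argument uses only $\lambda(J)<\infty$ and $(-\varepsilon,\varepsilon)\subseteq J^{c}$ for some $\varepsilon>0$, both of which follow from $0\in\mathrm{int}(J^{c})$.
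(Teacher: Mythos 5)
Your proof is correct and follows essentially the same route as the paper's, which simply asserts in one sentence that every event in $\mathcal{F}_{\nu(J)-}$ is determined by the restriction of the Poisson point process to $[0,\infty)\times J^{c}$; you have filled in the restriction/marking details and the $\pi$-system argument that the paper leaves implicit. Your one genuine refinement is to enlarge the conditioning $\sigma$-algebra to $\sigma(\nu(J))\vee\sigma(N_{J^{c}})$, which is the precise statement needed since $\nu(J)$ itself is $\mathcal{F}_{\nu(J)-}$-measurable but not a function of $N_{J^{c}}$ alone.
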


\begin{proof}
This is an elementary consequence of the Poisson point process
representation, using the fact that any event in the $\sigma -$algebra
$\mathcal{F}_{\nu (J)-}$ is determined by the restriction of the
Poisson point process to $[0,\infty )\times J^{c}$.
\end{proof}

\begin{cor}\label{corollary:jumpIndependence}
For all $x>A>0$, as $\varepsilon  \rightarrow 0$,
\begin{equation}\label{eq:shortTimeAsymptoticsC}
	\lim_{\varepsilon \rightarrow 0} P (X_{\tau^{+} _{A}}>x
	\,|\,\tau^{+} _{A}<\varepsilon) = \frac{\lambda  [x,\infty
	)}{\lambda  [A,\infty )} = (x/A)^{-\alpha}.
\end{equation}
Moreover, this relation holds uniformly in the region $x>A\geq  1$.
\end{cor}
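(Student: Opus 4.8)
The proof will combine three ingredients: the scaling property \eqref{scalingproperty}, the ``single large jump'' description extracted in the proof of Lemma~\ref{lemma:shortTimeAsymptotics}, and the jump independence of Lemma~\ref{lemma:jumpIndependence}. The first step is to use scaling to eliminate the parameter $A$. By \eqref{scalingproperty} the rescaled process $\tilde X_{t}:=X_{A^{\alpha}t}/A$ is again a symmetric $\alpha$-stable process, and under this rescaling $\tau^{+}_{A}=A^{\alpha}\tau^{+}_{1}(\tilde X)$ and $X_{\tau^{+}_{A}}=A\,\tilde X_{\tau^{+}_{1}(\tilde X)}$, so that
\begin{equation*}
	P\bigl(X_{\tau^{+}_{A}}>x \,\big|\, \tau^{+}_{A}<\varepsilon\bigr)=P\bigl(X_{\tau^{+}_{1}}>x/A \,\big|\, \tau^{+}_{1}<\varepsilon A^{-\alpha}\bigr).
\end{equation*}
Since $A\ge 1$ forces $\varepsilon A^{-\alpha}\le\varepsilon$ and $x/A>1$, and since $(x/A)^{-\alpha}=\lambda[x/A,\infty)/\lambda[1,\infty)$, the corollary -- including the uniformity over $x>A\ge1$ -- reduces to showing that $P(X_{\tau^{+}_{1}}>y \mid \tau^{+}_{1}<\varepsilon)\to y^{-\alpha}$ as $\varepsilon\to0$, \emph{uniformly for $y>1$}.

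To prove this, fix a small $\delta\in(0,1/4)$ and put $J=(1-\delta,\infty)$, so that $0$ lies in the interior of $J^{c}$; let $\nu=\nu_{J}$ be the first $J$-jump time and $\Delta=X_{\nu}-X_{\nu-}$ its size, which has law $\lambda(\cdot\cap J)/\lambda(J)$ and, by Lemma~\ref{lemma:jumpIndependence}, is independent of $\mathcal F_{\nu-}$. Arguing exactly as in the proof of Lemma~\ref{lemma:shortTimeAsymptotics} (``two successive first passages of size $\delta$ before time $\varepsilon$'' has probability $O(\varepsilon^{2})$, with constant independent of $y$), one checks that the event $\{\tau^{+}_{1}<\varepsilon\}$ and the event
\begin{equation*}
	H\cap\{X_{\nu-}+\Delta\ge1\},\qquad H:=\{\nu<\varepsilon\}\cap\{X_{s}\in(-\delta,\delta)\ \text{for all }s<\nu\},
\end{equation*}
differ by a set of probability $O(\varepsilon^{2})$ with the same $y$-independent constant, and that on the latter event $\tau^{+}_{1}=\nu$ and $X_{\tau^{+}_{1}}=X_{\nu-}+\Delta$ with $X_{\nu-}\in[-\delta,\delta]$. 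Here $H$ is $\mathcal F_{\nu-}$-measurable, so $\Delta$ is independent of $H$ and of $X_{\nu-}$.

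Now $X_{\nu-}\in[-\delta,\delta]$ sandwiches $\{X_{\nu-}+\Delta>y\}$ between $\{\Delta>y+\delta\}$ and $\{\Delta>y-\delta\}$, and $\{X_{\nu-}+\Delta\ge1\}$ between $\{\Delta\ge1+\delta\}$ and $\{\Delta\ge1-\delta\}$. Factoring $P(H\cap\{\Delta>z\})=P(H)\,\lambda[z,\infty)/\lambda(J)$ for $z\ge1-\delta$ by independence, the common factor $P(H)$ cancels in the conditional probability; moreover $P(\tau^{+}_{1}<\varepsilon)\sim\lambda[1,\infty)\varepsilon$ by Lemma~\ref{lemma:shortTimeAsymptotics}, so the $O(\varepsilon^{2})$ corrections become $y$-uniform errors of order $\varepsilon$. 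Using $\lambda[a,\infty)=a^{-\alpha}/\alpha$ this gives
\begin{equation*}
	(1-\delta)^{\alpha}(y+\delta)^{-\alpha}-O(\varepsilon)\ \le\ P\bigl(X_{\tau^{+}_{1}}>y \,\big|\, \tau^{+}_{1}<\varepsilon\bigr)\ \le\ (1+\delta)^{\alpha}(y-\delta)^{-\alpha}+O(\varepsilon).
\end{equation*}
Letting $\varepsilon\to0$ and then $\delta\to0$, both outer expressions tend to $y^{-\alpha}$, and the convergence is uniform on $y\ge1$ since there $\bigl|(1-\delta)^{\alpha}(y+\delta)^{-\alpha}-y^{-\alpha}\bigr|$ and $\bigl|(1+\delta)^{\alpha}(y-\delta)^{-\alpha}-y^{-\alpha}\bigr|$ are each at most $(1+\delta)^{\alpha}(1-\delta)^{-\alpha}-1\to0$.

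The substance of the argument is the bookkeeping rather than any single clever step, and I expect two points to need care. First, one must verify that the conditioning events -- ``the path stays in $(-\delta,\delta)$ up to $\nu$'', $\{\nu<\varepsilon\}$, and the value $X_{\nu-}$ -- are genuinely $\mathcal F_{\nu-}$-measurable, so that Lemma~\ref{lemma:jumpIndependence} applies. Second, and this is the main obstacle, one must control the two error sources -- the multi-jump term of size $O(\varepsilon^{2})$ and the ``$\delta$-fuzz'' coming from $X_{\nu-}\ne0$ -- uniformly over the \emph{unbounded} range $y>1$; this works because, after division by $P(\tau^{+}_{1}<\varepsilon)\sim\lambda[1,\infty)\varepsilon$, the $O(\varepsilon^{2})$ term and the constant in it carry no $y$-dependence, while the $\delta$-fuzz is absorbed by the uniform modulus of continuity of $a\mapsto a^{-\alpha}$ on $[1,\infty)$. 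Taking the limits in the order $\varepsilon\to0$ before $\delta\to0$ is exactly what converts the pointwise statement into the uniform-in-$x$ assertion of the corollary.
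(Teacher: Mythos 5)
Your proof is correct and follows the route the paper intends: the corollary is left there as ``a similar argument,'' namely the single-large-jump decomposition from Lemma~\ref{lemma:shortTimeAsymptotics} combined with the jump-size independence of Lemma~\ref{lemma:jumpIndependence}, and your write-up (including the scaling reduction to $A=1$, the check that $H$ and $X_{\nu-}$ are $\mathcal{F}_{\nu-}$-measurable, and the order of limits $\varepsilon\to0$ before $\delta\to0$ for uniformity in $x>A\geq 1$) supplies exactly those details. The one gloss is that the symmetric-difference error is really $O(\varepsilon^{2})+O(\delta\varepsilon)$ rather than $O(\varepsilon^{2})$ (the first exit jump can land in $(1-\delta,1)$ and be followed by a short second passage of size less than $\delta$), but this is the same looseness present in the paper's own proof of Lemma~\ref{lemma:shortTimeAsymptotics} and is harmless here, since after dividing by $P(\tau^{+}_{1}<\varepsilon)\asymp\varepsilon$ it contributes an $O(\delta)$ term that your final $\delta\to0$ limit absorbs.
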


\section{A Pseudo-Differential Equation for the CDF} \label{section:pseudo-DE}

Let $M$ be the maximal displacement of a CBSS process.  The tail
distribution function of $M$ will be denoted by
\begin{equation}\label{eq:tailDF}
	 u(x)=P \{ M \geq x \}.
\end{equation}
Clearly, $u (x)=1$ for all $x \leq 0$, and it is easily seen that $0<u
(x)<1$ for all $x>0$. Since $M<\infty$ with probability one, $\lim_{x
\rightarrow \infty} u (x)=0$. Furthermore, the strong Markov property
for the CBSS implies that $u$ is continuous, as the following argument
shows. Fix $x\geq 0$, and denote by $T_{x}$ the first time that a
particle of the CBSS process reaches $x$; this is a stopping time.  By
the strong Markov property, the post-$T_{x}$ process initiated by the
particle at $x$ is itself a CBSS process; this process will, with
(conditional) probability $1$, place a particle in $(x,\infty)$ at
some time after $T_{x}$, because (i) the initiating particle will not
immediately die, and (ii) a symmetric stable process started at $0$
must immediately enter both the positive and negative halflines.

The key to our analysis of the tail behavior of $u$ is that $u$
satisfies the following pseudo-differential equation.

\begin{prop}
$u(x)$ solves the following nonlinear boundary value problem
\begin{equation}\label{initialvalueproblem}
\begin{cases}
(-\Delta)^{\alpha/2} u(x) + \frac{1}{2}(u(x))^2 = 0 \quad & \text{for $x>0$},\\
u(x)=1 & \text{for $x \leq 0$}.
\end{cases}
\end{equation}
\end{prop}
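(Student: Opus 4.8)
The plan is to use the classical reduction of branching functionals to semilinear equations. It is convenient first to rephrase the problem with a fixed threshold and a variable starting point: by spatial homogeneity of the CBSS, if $w(z)$ denotes the probability that a CBSS started from one particle at $z\in\R$ ever places a particle in $[x,\infty)$, then $w(z)=u(x-z)$, with $w\equiv 1$ on $[x,\infty)$ and $0<w<1$ on $(-\infty,x)$. Since the kernel $|z-\zeta|^{-1-\alpha}$ in \eqref{fractionallaplacian} is symmetric, $(-\Delta)^{\alpha/2}$ commutes with the reflection $z\mapsto x-z$, so it is enough to prove $(-\Delta)^{\alpha/2}w(z)+\tfrac12 w(z)^{2}=0$ for every $z<x$; the boundary condition $u\equiv 1$ on $(-\infty,0]$ is immediate from the definition of $u$, and is in any case already encoded in the singular integral \eqref{fractionallaplacian}.

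The main step is a one-step analysis based on the first branch event. Let $\{X_{t}\}$ be the symmetric $\alpha$-stable motion of the initiating particle, started from $z<x$, let $T\sim\mathrm{Exp}(1)$ be its branching time (independent of $X$), and let $\tau=\tau^{+}_{x}$, which is a.s.\ finite by the first-passage remarks of Section~\ref{sec:preliminaries}. Decompose $\{M\ge x\}$ according to whether $X$ reaches $[x,\infty)$ during $[0,T]$ and, if not, whether the particle splits or dies at time $T$; on $\{\tau>T\}$ the particle sits a.s.\ strictly below $x$ at time $T$ (the events $\{\tau=T\}$ and $\{X_{T}=x\}$ being null), so by the strong Markov and branching properties the two children, when present, initiate independent CBSS processes from $X_{T}\in(-\infty,x)$. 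This yields
\begin{equation*}
w(z)=P^{z}\{\tau<T\}+\tfrac12\,E^{z}\bigl[\mathds{1}\{\tau>T\}\,\bigl(2w(X_{T})-w(X_{T})^{2}\bigr)\bigr].
\end{equation*}
Using $P^{z}\{\tau<T\}=E^{z}[e^{-\tau}]$ and integrating out the exponential clock, this becomes the Feynman--Kac identity
\begin{equation*}
w(z)=E^{z}\Bigl[e^{-\tau}+\int_{0}^{\tau}e^{-t}\bigl(w(X_{t})-\tfrac12 w(X_{t})^{2}\bigr)\,\dd t\Bigr].
\end{equation*}
The right-hand side is exactly the Dynkin/Feynman--Kac solution, on $(-\infty,x)$, of the boundary value problem with generator $-(-\Delta)^{\alpha/2}$, killing rate $1$, and boundary data $1$ on $[x,\infty)$, namely
\begin{equation*}
-(-\Delta)^{\alpha/2}W(z)-W(z)=-\bigl(W(z)-\tfrac12 W(z)^{2}\bigr)\qquad(z<x).
\end{equation*}
Cancelling the linear terms gives $(-\Delta)^{\alpha/2}W+\tfrac12 W^{2}=0$ on $(-\infty,x)$, which is the claim for $w$, hence for $u$ after undoing the reflection.

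The step I expect to be the main obstacle is the rigorous justification of this identification. Dynkin's formula in the clean form used above requires $w$ to lie in the domain of the generator $-(-\Delta)^{\alpha/2}$ --- equivalently, that the singular integral \eqref{fractionallaplacian} converge and realize the limit \eqref{infinitesimalgenerator} at every $z<x$ --- whereas a priori one knows only that $u$ is bounded and continuous (from the argument preceding the Proposition). I would close this gap by a regularity bootstrap starting from the integral equation itself: the $1$-resolvent of the $\alpha$-stable process killed on entering $[x,\infty)$ maps bounded functions into (locally) H\"older functions, and iterating it on the bounded function $g:=2w-w^{2}$ shows that $w$ is regular enough on $(-\infty,x)$ for the generator to act pointwise and for the Feynman--Kac identity to be differentiated. (Alternatively, one reads off from the integral equation that $w$ solves the pseudo-differential equation in the distributional or viscosity sense, and then invokes interior regularity theory for the fractional Laplacian with H\"older right-hand side.) The remaining ingredients --- carefully verifying the one-step decomposition, the nullity of the events $\{\tau=T\}$ and $\{X_{T}=x\}$, and the measurability and dominated-convergence bookkeeping when integrating out $T$ --- are routine given the structure of the CBSS and the absolute continuity of the laws of $X_{T}$ and $\tau$.
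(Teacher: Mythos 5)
Your argument is correct, but it is a genuinely different derivation from the one in the paper. The paper conditions on the evolution of the CBSS only up to a small time $\varepsilon$, splits according to whether the initial particle fissions, dies, or does neither before $\varepsilon$, expands each term to first order, and obtains \eqref{initialvalueproblem} by dividing by $\varepsilon$ and letting $\varepsilon\to 0$; the price is a collection of short-time overshoot estimates ($P\{X_\varepsilon^*\geq x\}=O(\varepsilon)$ and $P\{X_\varepsilon^*\geq x,\,X_\varepsilon<x\}=o(\varepsilon)$, via the reflection principle and the one-big-jump lemmas). You instead integrate out the entire exponential branching clock $T$ in one step, decomposing on $\{\tau^+_x<T\}$ versus $\{\tau^+_x>T\}$, which produces the global resolvent identity
\begin{equation*}
w(z)=E^{z}\Bigl[e^{-\tau}+\int_{0}^{\tau}e^{-t}\bigl(w(X_{t})-\tfrac12 w(X_{t})^{2}\bigr)\,\dd t\Bigr],
\end{equation*}
and then you read off the equation by inverting the killed $1$-resolvent; the linear terms cancel exactly as you say, and the reflection $z\mapsto x-z$ commutes with the symmetric kernel in \eqref{fractionallaplacian}, so the transfer back to $u$ is legitimate. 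Your probabilistic decomposition is sound (the events $\{\tau=T\}$ are indeed null, and $\tau>T$ already forces $X_T<x$). What your route buys is that the integral identity is exact --- no $o(\varepsilon)$ bookkeeping or short-time first-passage asymptotics are needed --- and it delivers a Feynman--Kac representation of $u$ as a byproduct, anticipating the formula \eqref{feynmankacformulaforu} that the paper derives separately in Section~\ref{sectionfeynmankac}. What the paper's route buys is that it never has to identify a resolvent with the pointwise generator: your final step requires knowing that $w$ is regular enough (locally H\"older of order exceeding $\alpha$, say) for the singular integral to converge and agree with the generator, which you correctly flag as the main obstacle. Your proposed repair --- bootstrapping regularity through the smoothing of the killed resolvent, or passing through a distributional formulation plus interior Schauder theory for $(-\Delta)^{\alpha/2}$ --- is the standard and correct fix; note for fairness that the paper's own proof is no more careful on this point, since it silently assumes $u$ lies in the domain of the generator when it invokes \eqref{infinitesimalgenerator}.
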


\begin{proof}
Fix any $x>0$. Let us calculate $P \{ M<x \} = 1-u(x)$ by conditioning
on the evolution of the $CBSS$ process up to time $\varepsilon >0$. Up
until the time $T$ that it first fissions or dies, the initiating
particle follows a symmetric $\alpha$-stable trajectory. Let $\{X_s
\}_{s \geq 0}$ be a generic symmetric $\alpha$-stable process, and
write $X_t^* := \max_{s \in [0,t]} X_s$. By
Lemma~\ref{reflectionprinciple} (the ``reflection principle''), for
any fixed $x>0$, as $\varepsilon  \rightarrow 0$,
\begin{equation}\label{eq:short-time-max}
	P\{X^{*}_{\varepsilon}\geq x \}\leq 2P\{X_{\varepsilon}\geq x
	\}=O (\varepsilon). 
\end{equation}

The distribution of $T$ is  exponential with mean one, and the event
that the initiating particle fissions rather than dies at time $T$ is
Bernoulli-$1/2$, independent of $T$. If the initiating particle
fissions then the event $M<x$ requires that \emph{both} of the CBSS
processes engendered by the fission have maximal displacements $<x$
\emph{and} that the path of the initial particle up to the time of
fission stays below the level $x$. Hence,
\begin{align}\label{eq:term-1}
P \{ M<x, &\text{ initial particle fissions before time $\varepsilon$} \}\\
\notag 
= & \; \frac{1}{2} \int_0^\varepsilon e^{-t} \Bigl ( \int_{-\infty}^x \bigl ( P \{ M<x-y \} \bigr )^2 \,\dd F_t(y) - P \{ X_t^* \geq x \} \Bigr ) \,\dd t\\
\notag 
= & \; \frac{1}{2} \int_0^\varepsilon e^{-t}\int_{-\infty}^x \bigl ( 1-u(x-y) \bigr )^2 \,\dd F_t(y) \,dt + o(\varepsilon)\\
\notag 
= & \; \frac{1}{2} \, \varepsilon \, \bigl ( 1-u(x) \bigr )^2 + o(\varepsilon),
\end{align}
where $F_t$ is the distribution of the random variable $X_{t}$. The
last equality holds because $u$ is continuous and bounded and
$F_{t}\Rightarrow F_0=\delta_0$ as $t \rightarrow 0$.  The second
equality follows from the estimate \eqref{eq:short-time-max}. 
A similar argument shows that
\begin{align}\label{eq:term-2}
 P \{ M<x, &\text{ initial particle dies  before time $\varepsilon$} \}\\
\notag = & \; \frac{1}{2} \int_0^\varepsilon e^{-t} \cdot P^0 \{ X_t^* < x\} \,\dd t\\
\notag = & \; \frac{1}{2} \, \varepsilon + o(\varepsilon).
\end{align}

Next, by the Markov property,
\begin{align}\label{eq:term-3}
P \{ M<x \; & \text{and} \; T>\varepsilon \}\\
\notag = & \; e^{-\varepsilon} \Bigl ( \int_{-\infty}^x P \{ M<x-y \}
\,\dd F_\varepsilon(y) - P \{ X_\varepsilon^* \geq x, X_\varepsilon<x \} \Bigr
).  
\end{align}
The first term  in \eqref{eq:term-3}
is equal to
\begin{equation*}
\begin{aligned}
& \; e^{-\varepsilon} \int_{-\infty}^x \bigl ( 1-u(x-y) \bigr ) \,\dd F_\varepsilon(y)\\
= & \; e^{-\varepsilon} \Bigl ( \int_{-\infty}^{\infty} \bigl ( 1-u(x-y) \bigr ) \,\dd F_\varepsilon(y) - (1-u(x)) \Bigr ) + e^{-\varepsilon} (1-u(x))\\
= & \; e^{-\varepsilon} \int_{-\infty}^{\infty} \bigl ( u(x)-u(x-y) \bigr ) \,\dd F_\varepsilon(y) + e^{-\varepsilon} (1-u(x))\\
= & \; \varepsilon \, (-\Delta )^{\alpha /2} u (x) + (1-\varepsilon)(1-u(x)) + o(\varepsilon).
\end{aligned}
\end{equation*}
In the third equality we exploited the boundary condition $u(x-y)=1$
when $x-y \leq 0$. The last equality follows from
\eqref{infinitesimalgenerator}. 

The three terms \eqref{eq:term-1}, \eqref{eq:term-2}, and
\eqref{eq:term-3} account for all of the terms in the
pseudo-differential equation \eqref{initialvalueproblem}. Thus, to
complete the proof 
it remains only to show that the second term in \eqref{eq:term-3} satisfies
\[
	P \{ X_\varepsilon^* \geq x, X_\varepsilon<x \} = o(\varepsilon).
\]
For this we appeal to
Lemmas~\ref{reflectionprinciple}--\ref{lemma:shortTimeAsymptotics}.
Relation~\eqref{eq:shortTimeAsymptotics} of
Lemma~\ref{lemma:shortTimeAsymptotics} implies that $P (\tau
(x)\leq \varepsilon)=P (X^{*}_{\varepsilon})=O (\varepsilon)$, so it is
enough to show that as $\varepsilon  \rightarrow 0$,
\[
	P (X_{\varepsilon}<x \,|\, \tau (x)\leq \varepsilon)=o (1).
\]
Now if $X_{\tau (A)}>x+\beta$ then in order that $X_{\varepsilon}<x$
the process must traverse an interval of size $\beta$ in time
$<\varepsilon$, and by Lemma~\ref{reflectionprinciple} the chance of
this is no more than $2P (X_{\varepsilon}>\beta)$. Furthermore, by the
scaling law \eqref{scalingproperty}, if $\beta =\varepsilon^{\varrho}$
for some $\varrho <1/\alpha$ then $P (X_{\varepsilon}>\beta)=o (1)$ as
$\varepsilon  \rightarrow 0$. Consequently,
\begin{align*}
	P (X_{\varepsilon}<x \,|\,& \tau (x)\leq \varepsilon)\\
	&=	P (X_{\varepsilon}<x \;\text{and}\;X_{\tau
	(x)}>x+\varepsilon^{\varrho} \,|\, \tau (x)\leq \varepsilon) \\
	&+ P (X_{\varepsilon}<x \;\text{and}\; X_{\tau
	(x)}\leq x+\varepsilon^{\varrho} \,|\, \tau (x)\leq
	\varepsilon) \\
	&=o (1)+o (1),
\end{align*}
the last by relation~\eqref{eq:shortTimeAsymptoticsC}, which implies
that $P ( X_{\tau
	(x)}\leq x+\varepsilon^{\varrho} \,|\, \tau (x)\leq
	\varepsilon) \rightarrow 0$ 
as $\varepsilon \rightarrow 0$.

Finally, recall that $P \{ M<x \} = 1-u(x)$ is equal to the sum of the
three probabilities \eqref{eq:term-1}, \eqref{eq:term-2}, and
\eqref{eq:term-3} above. Therefore, $$\varepsilon (1-u(x)) =
\frac{1}{2} \varepsilon \bigl ( 1-u(x) \bigr )^2 + \frac{1}{2} \varepsilon +
\varepsilon \int_{-\infty}^{\infty} \bigl ( u(x)-u(x-y) \bigr ) \,\dd
\nu(y) + o(\varepsilon).$$ Dividing both sides by $\varepsilon$, then letting
$\varepsilon \rightarrow 0$, we conclude that $$\int_{-\infty}^{\infty}
\bigl ( u(x)-u(x-y) \bigr ) \,\dd \nu(y) + \frac{1}{2} (u(x))^2 = 0.$$
\end{proof}

\section{A Priori Bounds for $u(x)$}\label{sectionaprioribound}

The first step toward establishing the sharp asymptotic formula
\eqref{mainresultequation} will be to show that the function $u$
satisfies the rough asymptotic formula \eqref{eq:orderOfMagnitude}.
We will give two different arguments, one probabilistic, the other
analytic, the first showing that the \emph{particular} function $u$
defined by \eqref{eq:tailDF} satisfies the inequalities
\eqref{eq:orderOfMagnitude}, the second proving the following
(superficially) more general result. (It will follow from the
Feynman-Kac formula~\eqref{feynmankacformulaforu} below that the
solution to the boundary value problem \eqref{initialvalueproblem} is
unique, hence must coincide with \eqref{eq:tailDF}.)

\begin{prop}\label{aprioribound}
Let $u(x)$ be a continuous positive solution to the boundary value
problem \eqref{initialvalueproblem}, and suppose that $u(x)
\rightarrow 0$ as $x \rightarrow \infty$. Then there exist positive
constants $C_1$ and $C_2$ such that
\begin{equation}\label{aprioriboundforu}
\frac{C_1}{x^{\alpha/2}} \leq u(x) \leq \frac{C_2}{x^{\alpha/2}}
\end{equation}
for all  $x\geq 1$.
\end{prop}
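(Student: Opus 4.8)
The plan is to establish the two-sided bound by a comparison (super/subsolution) argument for the nonlinear boundary value problem \eqref{initialvalueproblem}, using explicit power functions as barriers. The natural ansatz is suggested by the heuristics: try $v_\beta(x) = c\,(x+1)^{-\beta}$ for $x\geq 0$ (and $v_\beta(x)=c$, or simply $1$ if $c\geq 1$, for $x\leq 0$), and see for which exponents $\beta$ and constants $c$ this is a supersolution versus a subsolution of $(-\Delta)^{\alpha/2}v + \tfrac12 v^2 \lessgtr 0$. The key computation is the action of the fractional Laplacian on a function that decays like $x^{-\beta}$ at infinity: one expects $(-\Delta)^{\alpha/2}v(x) \sim C_{\alpha,\beta}\,x^{-\alpha-\beta}$ for $x$ large, with the sign of $C_{\alpha,\beta}$ depending on $\beta$. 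The quadratic term contributes something of order $x^{-2\beta}$. Balancing $x^{-\alpha-\beta}$ against $x^{-2\beta}$ forces $\beta = \alpha/2$, which is exactly the exponent in \eqref{aprioriboundforu}; this is the core reason the right power is $\alpha/2$. For $\beta$ slightly larger than $\alpha/2$ the quadratic term dominates and has the wrong sign, making such a $v$ a subsolution; for $\beta$ slightly smaller, the fractional-Laplacian term dominates and one gets a supersolution — after adjusting constants one can in fact arrange barriers with $\beta = \alpha/2$ exactly on both sides, by choosing $c$ large (for the supersolution, to make the negative quadratic term beat the fractional Laplacian) and $c$ small (for the subsolution).

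The main steps, in order, are: (1) establish a \emph{comparison principle}: if $\bar v$ is a bounded supersolution and $\underline v$ a bounded subsolution of \eqref{initialvalueproblem} on $(0,\infty)$ with $\bar v \geq \underline v$ on $(-\infty,0]$ and both tending to $0$ at $+\infty$, then $\bar v \geq \underline v$ everywhere — this is presumably the "comparison principle" promised in the plan of the paper (section~\ref{sectionaprioribound}); (2) verify by direct estimation that $\bar u(x) = C_2 (x+1)^{-\alpha/2}$ (capped at $1$) is a supersolution for $C_2$ large enough, by computing $(-\Delta)^{\alpha/2}\bar u(x)$ and checking the inequality separately on a bounded region $x\in[1,R]$ (where $\tfrac12 \bar u^2$ is bounded below and $(-\Delta)^{\alpha/2}\bar u$ is bounded) and on $x>R$ (where the matched asymptotics $(-\Delta)^{\alpha/2}\bar u(x)\asymp -C_2 x^{-\alpha/2-\alpha}$ up to lower order, versus $+\tfrac12 C_2^2 x^{-\alpha}$, makes the quadratic term win); (3) similarly verify $\underline u(x) = C_1(x+1)^{-\alpha/2}$ is a subsolution for $C_1$ small; (4) handle the region $x\in(0,1)$ / the boundary matching so that the hypotheses of the comparison principle apply (e.g. note $u$ is bounded above by $1$, positive, so one only needs $C_2\geq 1$ and $C_1 \leq \inf_{0<x\leq 1} u(x)$, using continuity and positivity of $u$ from section~\ref{section:pseudo-DE}); (5) apply the comparison principle to conclude $\underline u \leq u \leq \bar u$, which gives \eqref{aprioriboundforu} for $x\geq 1$ since $(x+1)^{-\alpha/2}\asymp x^{-\alpha/2}$ there.

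The hard part will be Step (1), the comparison principle, because the operator is nonlocal: the usual trick of evaluating at an interior maximum of $\bar v - \underline v$ is delicate, since the fractional Laplacian at a point depends on the function's values everywhere, including on $(-\infty,0]$ where $\bar v - \underline v$ may vanish or be negative-free only by assumption. The standard remedy is: suppose $\inf(\bar v - \underline v) = -m < 0$; since both functions vanish at $+\infty$ and $\bar v - \underline v \geq 0$ on $(-\infty,0]$, the infimum is attained (or nearly attained) at some $x_0 \in (0,\infty)$; at such a point $(-\Delta)^{\alpha/2}(\bar v - \underline v)(x_0) = \int \frac{(\bar v-\underline v)(x_0) - (\bar v-\underline v)(y)}{|x_0-y|^{1+\alpha}}\,dy \leq 0$ (the integrand is $\leq 0$ since $x_0$ is a minimum), while the PDE gives $(-\Delta)^{\alpha/2}(\bar v - \underline v)(x_0) \geq -\tfrac12(\bar v(x_0)^2 - \underline v(x_0)^2) = -\tfrac12(\bar v(x_0)+\underline v(x_0))(\bar v(x_0) - \underline v(x_0)) = \tfrac12(\bar v(x_0)+\underline v(x_0))\, m \geq 0$, forcing the integrand to vanish a.e., i.e. $\bar v - \underline v$ is constant — contradicting decay at infinity unless $m=0$. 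One must take care that the infimum is genuinely attained at an interior point (approximation/penalization if not) and that the sign of the quadratic difference goes the right way, which it does precisely because $\bar v, \underline v \geq 0$. The other computations (Steps 2–3) are routine but require care with the singular integral near $x=0$, where one uses that the barriers are smooth and bounded (capped at $1$) on $(-\infty, 1]$ so the fractional Laplacian is well-defined and continuous there.
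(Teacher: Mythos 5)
Your strategy coincides with the paper's analytic proof of Proposition~\ref{aprioribound}: a nonlocal comparison principle proved by evaluating $(-\Delta)^{\alpha/2}$ at an interior extremum of the difference and extracting a sign contradiction from the quadratic term, together with barriers $C(1+x)^{-\alpha/2}$ capped at $1$ on $(-\infty,0]$. The comparison-principle step and the supersolution direction are sound. But there is a genuine error in your key computation, and it is exactly the subsolution half that it breaks. For $w(x)=(1+x)^{-\alpha/2}$ on $x>0$ with $w\equiv 1$ on $x\leq 0$, the leading behavior of $-(-\Delta)^{\alpha/2}w(x)$ as $x\to\infty$ does \emph{not} come from the power-law tail of $w$; it comes from the region where $w\equiv 1$:
\[
\int_{-\infty}^{0}\frac{1-w(x)}{|x-y|^{1+\alpha}}\,\dd y=\bigl(1-w(x)\bigr)\frac{x^{-\alpha}}{\alpha}\sim\frac{1}{\alpha}\,x^{-\alpha},
\]
whereas the pure-power part contributes only $O(x^{-3\alpha/2})=o(x^{-\alpha})$ (this is the paper's computation with the homogeneous auxiliary function $F$, $F(\lambda x)=\lambda^{-3\alpha/2}F(x)$). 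Thus $-(-\Delta)^{\alpha/2}w(x)=\tfrac{1}{\alpha}x^{-\alpha}+o(x^{-\alpha})$, not $\asymp x^{-3\alpha/2}$ as you assert; relatedly, your heuristic ``balance $x^{-\alpha-\beta}$ against $x^{-2\beta}$'' actually yields $\beta=\alpha$, not $\alpha/2$ --- the correct balance is the boundary contribution $x^{-\alpha}$ against $x^{-2\beta}$.

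Why this is not cosmetic: writing $(-\Delta)^{\alpha/2}(Cw)+\tfrac12(Cw)^2=C\bigl((-\Delta)^{\alpha/2}w+\tfrac{C}{2}w^2\bigr)$, the subsolution inequality demands $-(-\Delta)^{\alpha/2}w(x)\geq\tfrac{C}{2}w(x)^2\asymp\tfrac{C}{2}x^{-\alpha}$ for all large $x$. Under your claimed asymptotics $-(-\Delta)^{\alpha/2}w(x)=O(x^{-3\alpha/2})=o(x^{-\alpha})$, no positive constant $C$, however small, satisfies this, so your step~(3) cannot be completed and the lower bound $u(x)\geq C_1x^{-\alpha/2}$ is not obtained. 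With the corrected expansion the argument closes exactly as you intend: one needs precisely \eqref{explicitsuperandsubsolutions}, i.e.\ that $-(-\Delta)^{\alpha/2}w(x)/x^{-\alpha}$ is bounded above (take $C$ large for the supersolution) and bounded below away from zero (take $C$ small for the subsolution). So the repair is a single computation, but as written the subsolution verification fails. (The paper also gives an independent probabilistic proof of \eqref{aprioriboundforu} for the particular $u$ of \eqref{eq:tailDF}, via the total-progeny tail of the critical Galton--Watson tree and a first-moment bound on the particle density; your proposal does not touch that route.)
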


\subsection{Probabilistic approach}\label{ssec:probabilistic} These
arguments apply specifically to the tail distribution function $u (x)$
of the maximal displacement $M$ of a CBSS process. Recall that in a
CBSS process, the number of particles alive at time $t$ is a standard
Yule (binary fission) process. The CBSS can be constructed by first
running a Yule process $Y_{t}$, then running independent symmetric
$\alpha-$stable processes along the edges of the resulting
genealogical tree. The Yule process itself can be built by first
constructing a discrete-time double-or-nothing Galton-Watson process
(i.e., a Galton-Watson process whose offspring distribution is
$p_{0}=p_{2}=1/2$) and then attaching independent unit exponential
random variables to the edges of the resulting Galton-Watson tree.

\begin{proof}
[Proof of the lower bound $u (x)\geq C_{1}/x^{\alpha /2}$.]  Denote by
$\xi$ the total progeny of the Yule process, that is, the number of
distinct particles born in the course of the branching
process. Equivalently, $\xi$ is $1+$the number of edges in the
genealogical tree. A well known (but somewhat difficult to
trace\footnote{The probability generating function of $\xi$ was
derived by I. J. Good~\cite{good} in 1949, and related results were
later obtained by Dwass~\cite{dwass} and Pakes~\cite{pakes}. It was
known to T. Harris~\cite{harris-firstPassage} that in the special case
where the offspring distribution is the geometric distribution with
mean $1$ the genealogical tree of the associated Galton-Watson process
is the excursion tree of a simple random walk excursion, from which it
follows directly that $\xi$ is $1/2$ the number of steps in the simple
random walk excursion.}) result from the elementary theory of
Galton-Watson processes has it that for a critical Galton-Watson
process whose offspring distribution has positive, finite variance,
\begin{equation}\label{eq:Galton-Watson-progeny}
	P\{\xi \geq m \}\asymp \frac{1}{\sqrt{m}}.
\end{equation}
Hence, there exists $C>0$ such that with probability at least
$C/\sqrt{m}$ the Yule tree has at least $m$ branches. 

The  branch lengths of the Yule tree are independent unit
exponentials, and so the spatial displacements $D_{e}$ of particles along
these edges $e$ are unit exponential mixtures of symmetric $\alpha-$stable
random variables $X_{t}$. Consequently, since the tail of a symmetric
$\alpha-$stable random variable is regularly varying with exponent
$\alpha$, there is a constant $C'>0$ such that, conditional on the
Galton-Watson tree, for each edge $e$
\begin{equation}\label{eq:stable-mixture-tail}
	P\{|D_{e}| \geq 3 m^{1/\alpha}\}\geq C'/m.
\end{equation}
Therefore, since the random variables $D_{e}$ are conditionally
independent given the Galton-Watson tree, it follows from
\eqref{eq:Galton-Watson-progeny} and \eqref{eq:stable-mixture-tail}
that with probability at least $C''/\sqrt{m}$ there will be some edge
$e$ of the tree for which $|D_{e}|\geq 3m^{1/\alpha}$.  But on this
event there must be at least one particle of the CBSS process that
finds its way out of the interval
$[-m^{1/\alpha},m^{1/\alpha}]$. Since the CBSS process is invariant
under reflection of the space axis, it follows that
\[
	u (m^{1/\alpha})= P\{M\geq m^{1/\alpha} \}\geq C''/\sqrt{m}.
\]
\end{proof}

\begin{proof}
[Proof of the upper bound $u (x)\leq C_{2}/x^{\alpha /2}$] This relies
on the following elementary property of the CBSS process: the mean
particle density at location $dx$ at time $t$ is $f_{t} (x)\,dx$,
where $f_{t} (x)$ is the density of the symmetric $\alpha-$stable
random variable $X_{t}$.  Consequently, for any $x\geq 0$ and $t>0$,
the conditional expectation of the number of particles to the right of
$x$ at time $t$ given that some particle of the CBSS reaches the
halfline $[x,\infty)$ before time $t$ is at least $1/2$. It follows
that
\[
	u (x)= P\{M\geq x \}\leq 2 P\{X_{t}\geq x \} +P\{Y_{t}\geq 1 \},
\]
where $Y_{t}$ is the skeletal Yule process, $M$ is the maximal
displacement of the CBSS process, and $X_{t}$ is a generic symmetric
$\alpha-$stable process. By setting $t=x^{-\alpha /2}$ and using the
fact that the distribution of $X_{1}$ has regularly varying tail with
exponent $\alpha$ and the fact (essentially Kolmogorov's theorem for
critical branching processes) that $P\{Y_{t}\geq t \}\sim C/t$, we
obtain the desired estimate
\[
	u (x)\leq C' /x^{\alpha /2}.
\]
\end{proof}

\subsection{Analytic approach}\label{ssec:analytic}

We shall prove Proposition~\ref{aprioribound} in general by first
establishing a comparison principle for the boundary value problem
\eqref{initialvalueproblem}, then comparing our $u(x)$ to a explicit
supersolutions and subsolutions of \eqref{initialvalueproblem}, both
of which decay to zero as a constant times $x^{-\alpha/2}$.  (Thanks
to Professor Luis Silvestre for suggesting this.)

\begin{prop}[Comparison Principle]\label{comparisonprinciple}
Let $u(x)$ be a continuous positive solution to the boundary value
problem \eqref{initialvalueproblem}, and suppose that 
$u(x) \rightarrow 0$ as $x \rightarrow \infty$.\\ 
(A) Suppose that $U(x)$ is a continuous positive super-solution to
\eqref{initialvalueproblem}, meaning that
\begin{equation*}
\begin{cases}
(-\Delta)^{\alpha/2} U(x) + \frac{1}{2}(U(x))^2 \geq 0 \quad & \text{for $x>0$},\\
U(x) \geq 1 & \text{for $x \leq 0$}.
\end{cases}
\end{equation*}
Furthermore, assume that $U(x) \rightarrow 0$ as $x \rightarrow
\infty$. Then, $$u(x) \leq U(x) \quad \text{for all $x \in \R$}.$$ (B)
Suppose that $V(x)$ is a continuous positive sub-solution to
\eqref{initialvalueproblem}, meaning that
\begin{equation*}
\begin{cases}
(-\Delta)^{\alpha/2} V(x) + \frac{1}{2}(V(x))^2 \leq 0 \quad & \text{for $x>0$},\\
V(x) \leq 1 & \text{for $x \leq 0$}.
\end{cases}
\end{equation*}
Furthermore, assume that $V(x) \rightarrow 0$ as $x \rightarrow \infty$. Then, $$u(x) \geq V(x) \quad \text{for all $x \in \R$}.$$
\end{prop}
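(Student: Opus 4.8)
The plan is to prove part (A); part (B) is symmetric. The idea is a standard comparison argument for the fractional Laplacian: if $u$ is a solution and $U$ a supersolution, both tending to $0$ at $+\infty$ and both $\geq$ their boundary data on $(-\infty,0]$, I want to show $w:=u-U\leq 0$ everywhere. Suppose not. Since $w\leq 0$ on $(-\infty,0]$ (there $u=1\leq U$), since $w$ is continuous, and since $w(x)\to 0$ as $x\to\infty$, a positive supremum $\sup_{\R} w = m>0$ is attained at some point $x_0>0$ (the supremum cannot escape to $+\infty$ because $w\to 0$ there, and it is not attained on $(-\infty,0]$ where $w\le 0$; continuity on the compact region where $w$ is close to $m$ gives a genuine maximizer). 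This is the crucial structural point that makes the nonlocal maximum principle work.

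At the maximizer $x_0>0$, I evaluate the operator on $w$. On one hand, from the equations,
\begin{equation*}
(-\Delta)^{\alpha/2} u(x_0) + \tfrac12 u(x_0)^2 = 0
\quad\text{and}\quad
(-\Delta)^{\alpha/2} U(x_0) + \tfrac12 U(x_0)^2 \geq 0,
\end{equation*}
so subtracting,
\begin{equation*}
(-\Delta)^{\alpha/2} w(x_0) \leq -\tfrac12\bigl(u(x_0)^2 - U(x_0)^2\bigr)
= -\tfrac12\bigl(u(x_0)+U(x_0)\bigr)\,w(x_0).
\end{equation*}
Since $u,U>0$ and $w(x_0)=m>0$, the right-hand side is strictly negative, so $(-\Delta)^{\alpha/2} w(x_0) < 0$. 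On the other hand, by the singular-integral representation \eqref{fractionallaplacian},
\begin{equation*}
(-\Delta)^{\alpha/2} w(x_0) = \int_{-\infty}^{\infty} \frac{w(x_0)-w(y)}{|x_0-y|^{1+\alpha}}\,\dd y \geq 0,
\end{equation*}
because $w(x_0)\geq w(y)$ for all $y$ by maximality and the kernel is positive. This contradiction forces $m\leq 0$, i.e. $u\leq U$ on all of $\R$.

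The main obstacle is justifying that the integral representation \eqref{fractionallaplacian} is actually valid for $u$ and $U$ (or at least for $w$) at the maximizer, and that the integral converges: near $y=x_0$ the integrand $\frac{w(x_0)-w(y)}{|x_0-y|^{1+\alpha}}$ could be non-integrable if $\alpha\geq 1$ and $w$ merely continuous, so one needs either enough local regularity of solutions (the solutions here are distribution functions, hence have some built-in smoothness away from $0$, and in fact one can invoke interior regularity for the fractional Laplacian) or one works with the viscosity/distributional formulation and tests against a smooth function touching $w$ from above at $x_0$. I would handle this by the viscosity-solution route: replace $w$ near $x_0$ by a smooth test function $\varphi\ge w$ with $\varphi(x_0)=w(x_0)$, for which $(-\Delta)^{\alpha/2}\varphi(x_0)$ is classically defined; then the nonnegativity argument applies to $\varphi$ and transfers to $w$ by the ordering, while the equation-side inequality still delivers strict negativity. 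A second, minor point to nail down is the compactness argument locating $x_0$: one fixes $\epsilon\in(0,m)$, notes $\{w\geq m-\epsilon\}$ is a closed subset of some bounded interval $[\,0,\,R_\epsilon]$ (using $w\to0$ at $+\infty$ and $w\le 0$ on $(-\infty,0]$), hence compact, so the continuous function $w$ attains its max there, and that max is $m>0$ so lies in $(0,R_\epsilon]$.
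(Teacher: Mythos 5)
Your argument is essentially identical to the paper's proof: locate a positive global maximum of $u-U$ at some $x_1>0$ using the boundary ordering, continuity, and decay at $+\infty$, then derive a contradiction by comparing the sign of $(-\Delta)^{\alpha/2}(u-U)(x_1)$ from the singular-integral representation (nonnegative at a maximum) with the strict negativity forced by the equation and the supersolution inequality. Your additional care about the convergence of the singular integral at the maximizer (and the viscosity-solution fallback) addresses a point the paper passes over silently, but it does not change the route.
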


\begin{proof}
We will only prove part (A). The proof of part (B) can be done in an
analogous manner.

We proceed by contradiction. Suppose that $u(x_0)>U(x_0)$ at some point
$x_0 \in \R$. Then $(u-U)(x)$, a continuous function that is
non-positive for $x \leq 0$ and goes to zero as $x \rightarrow
\infty$, would attain a strictly positive global maximum value at a
certain point $x_1>0$: $$(u-U)(x_1) = \max_{x \in \R} (u-U)(x) > 0.$$
Now consider the quantity $(-\Delta)^{\alpha/2} (u-U)(x_1)$. On one
hand, $$(-\Delta)^{\alpha/2} (u-U)(x_1) = \int_{-\infty}^{\infty}
\frac{(u-U)(x_1)-(u-U)(y)}{|x_1-y|^{1+\alpha}} \,\dd y \geq 0$$
because $(u-U)(x_1) \geq (u-U)(y)$ for all $y$. On the other hand,
\begin{equation*}
\begin{aligned}
(-\Delta)^{\alpha/2} (u-U)(x_1) & = (-\Delta)^{\alpha/2}u(x_1) - (-\Delta)^{\alpha/2}U(x_1)\\
& \leq -\frac{1}{2}(u(x_1))^2 +\frac{1}{2}(U(x_1))^2\\
& < 0.
\end{aligned}
\end{equation*}
This is a contradiction. Thus, $u(x) \leq U(x)$ for all $x \in \R$.
\end{proof}

\begin{proof}[Proof of Proposition  \ref{aprioribound}]
Consider the function
\begin{equation*}
w(x) =
\begin{cases}
(1+x)^{-\alpha/2} & \text{for $x>0$},\\
1 & \text{for $x \leq 0$}.
\end{cases}
\end{equation*}
We will show that, for a large enough constant $C$, $Cw(x+1)$ is a supersolution to \eqref{initialvalueproblem}, and for a small enough positive constant $C$, $Cw(x+1)$ is a subsolution to \eqref{initialvalueproblem}. Notice that $(-\Delta)^{\alpha/2} (Cw)(x+1) + \frac{1}{2}(Cw(x+1))^2 = C \bigl ( (-\Delta)^{\alpha/2} w(x+1) + \frac{1}{2}C(w(x+1))^2 \bigr )$. Hence it suffices to show $$\sup_{x>0} \frac{-(-\Delta)^{\alpha/2} w(x+1)}{\frac{1}{2}(w(x+1))^2} < \infty \quad \text{and} \quad \inf_{x>0} \frac{-(-\Delta)^{\alpha/2} w(x+1)}{\frac{1}{2}(w(x+1))^2} > 0.$$ Because $-(-\Delta)^{\alpha/2} w(x+1)$ is obviously continuous for $x \in [0,\infty)$, it eventually boils down to proving
\begin{equation}\label{explicitsuperandsubsolutions}
\limsup_{x \rightarrow \infty} \frac{-(-\Delta)^{\alpha/2} w(x)}{x^{-\alpha}} < \infty \quad \text{and} \quad \liminf_{x \rightarrow \infty} \frac{-(-\Delta)^{\alpha/2} w(x)}{x^{-\alpha}} > 0.
\end{equation}

Now for any $x>0$, let us compute
\begin{equation*}
\begin{aligned}
-(-\Delta)^{\alpha/2} w(x) & = -\int_{-\infty}^\infty \frac{w(x)-w(y)}{|x-y|^{1+\alpha}} \,\dd y\\
& = -\int_{-\infty}^{-1} \frac{(1+x)^{-\alpha/2}-1}{(x-y)^{1+\alpha}} \,\dd y -\int_{-1}^\infty \frac{(1+x)^{-\alpha/2}-(1+y)^{-\alpha/2}}{|x-y|^{1+\alpha}} \,\dd y\\
& := -A-B
\end{aligned}
\end{equation*}
The first integral can be easily evaluated: $$-A = \frac{1}{\alpha} \cdot (1-(1+x)^{-\alpha/2}) \cdot (x+1)^{-\alpha} \sim \frac{1}{\alpha} \cdot x^{-\alpha} \quad \text{as $x \rightarrow \infty$}.$$ To deal with the second integral $B$, consider an auxiliary function
\begin{equation*}
\begin{aligned}
F(x) & := \int_0^\infty \frac{x^{-\alpha/2}-y^{-\alpha/2}}{|x-y|^{1+\alpha}} \,\dd y\\
& = \int_0^\infty \frac{x^{-\alpha/2}-\lambda^{-\alpha/2}z^{-\alpha/2}}{|x-\lambda z|^{1+\alpha}} \,\lambda\dd z \qquad \text{($y=\lambda z$)}
\end{aligned}
\end{equation*}
where $\lambda>0$ is an arbitrarily chosen constant. Then, observe that
\begin{equation*}
\begin{aligned}
F(\lambda x) & = \int_0^\infty \frac{\lambda^{-\alpha/2}x^{-\alpha/2}-\lambda^{-\alpha/2}z^{-\alpha/2}}{|\lambda x-\lambda z|^{1+\alpha}} \,\lambda\dd z\\
& = \lambda^{-3\alpha/2} \int_0^\infty \frac{x^{-\alpha/2}-z^{-\alpha/2}}{|x-z|^{1+\alpha}} \,\dd z\\
& = \lambda^{-3\alpha/2} F(x)
\end{aligned}
\end{equation*}
for all $\lambda>0$ and all $x>0$. This scaling property of $F$ immediately implies that there exists constant $C$ such that $$F(x)=C \cdot x^{-3\alpha/2}.$$ To relate $F(x)$ to our integral $B$, we notice that
\begin{equation*}
\begin{aligned}
F(1+x) & = \int_0^\infty \frac{(1+x)^{-\alpha/2}-y^{-\alpha/2}}{|(1+x)-y|^{1+\alpha}} \,\dd y \qquad \text{(by definition of $F$)}\\
& = \int_{-1}^\infty \frac{(1+x)^{-\alpha/2}-(1+z)^{-\alpha/2}}{|x-z|^{1+\alpha}} \,\dd z \qquad \text{($y=1+z$)}\\
& = B.
\end{aligned}
\end{equation*}
Hence $$B = F(1+x) \sim C \cdot x^{-3\alpha/2} = o(x^{-\alpha}).$$ Thus, $$-(-\Delta)^{\alpha/2} w(x) = -A-B = \frac{1}{\alpha} \cdot x^{-\alpha} + o(x^{-\alpha}),$$ verifying \eqref{explicitsuperandsubsolutions}.

Therefore, there exist positive constants $C'_1$ and $C'_2$ such that $C'_1w(x)$ is a subsolution to \eqref{initialvalueproblem} and $C'_2w(x)$ is a supersolution to \eqref{initialvalueproblem}. By Proposition \ref{comparisonprinciple}, $C'_1w(x) \leq u(x) \leq C'_2w(x)$ for all sufficiently large $x$. Since $w(x) \sim x^{-\alpha/2}$ as $x \rightarrow \infty$, there are positive constants $C_1$ and $C_2$ such that $C_1 x^{-\alpha/2} \leq u(x) \leq C_2 x^{-\alpha/2}$ for all sufficiently large $x$, proving \eqref{aprioriboundforu}.
\end{proof}

\section{Proof of Theorem \ref{mainresult}}\label{sectionfeynmankac}

\subsection{ Feynman-Kac Representation of Solutions} Our approach to
Theorem~\ref{mainresult} will rely on an analogue of the Feynman-Kac
formula for solutions to pseudo-differential equations of the form
$-(-\Delta)^{\alpha/2} v(x) = q(x)v(x)$.  The operator
$-(-\Delta)^{\alpha/2}$ is the infinitesimal generator of the
symmetric $\alpha$-stable process, and hence the Feynman-Kac
representations will be functional integrals with respect to paths
$X_{t}$ of the symmetric $\alpha$-stable process. Denote by $P^{x}$
and $E^{x}$ the probability and expectation operators under which the
initial point of the process is $X_{0}=x$, and recall that
$\tau_{0}^{-}$ is the first-passage time to the half-line
$(-\infty,0)$.

\begin{thm}[Feynman-Kac Formula]\label{feynmankac}
Let $v:\zz{R}\rightarrow \zz{R}$ be a bounded, continuous solution of
\begin{equation}\label{feynmankacequation}
-(-\Delta)^{\alpha/2} v(x) = q(x)v(x) \quad \text{for all $x>0$},
\end{equation}
where $q(x)$ is a nonnegative and continuous. Then $$Z_t = \exp
\bigl ( -\int_0^{t \wedge \tau^{-}_{0}} q(X_s) \,\dd s \bigr ) \cdot v(X_{t
\wedge \tau_0})$$ is a bounded martingale with respect to the
filtration $\{ \mathscr{F}_{t \wedge \tau^{-}_{0}}^x \}_{t \geq
0}$. Consequently, by the Optional Stopping Theorem, for any stopping
time $\tau \leq \tau^{-}_{0}$,
\begin{equation}\label{feynmankacformula}
v(x) = E^x[\exp \bigl ( -\int_0^\tau q(X_s) \,\dd s \bigr ) \cdot v(X_\tau)].
\end{equation}
\end{thm}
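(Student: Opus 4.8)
The plan is to derive the martingale property of $Z_t$ from Dynkin's formula for the symmetric $\alpha$-stable process, using that the equation \eqref{feynmankacequation} forces the ``discounted drift'' of $Z_t$ to vanish as long as the process stays in the positive half-line, and then to read off \eqref{feynmankacformula} from the Optional Stopping Theorem. First I would dispose of the easy points. Since $q\geq0$, the exponential factor in $Z_t$ lies in $(0,1]$, so $|Z_t|\leq\|v\|_{\infty}:=\sup_{x\in\R}|v(x)|<\infty$ and $Z$ is uniformly bounded. Moreover, by the Hewitt--Savage $0$--$1$ law the liminf of a sample path of $X$ is $-\infty$, so from any starting point the passage time $\tau^{-}_{0}$ to $(-\infty,0)$ is a.s.\ finite; since $Z_t$ depends on $t$ only through $t\wedge\tau^{-}_{0}$, the process $Z$ is c\`adl\`ag, constant on $[\tau^{-}_{0},\infty)$, and $Z_{\infty}:=\lim_{t\to\infty}Z_t=\exp\bigl(-\int_{0}^{\tau^{-}_{0}}q(X_s)\,\dd s\bigr)v(X_{\tau^{-}_{0}})$ exists almost surely.

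The heart of the matter is the martingale property. Write $\mathcal A:=-(-\Delta)^{\alpha/2}$ for the generator of $X$ and $A_t:=\int_{0}^{t}q(X_s)\,\dd s$, a continuous nondecreasing finite-variation process. For a function $g$ that is bounded on $\R$ and smooth near the range of $X$, Dynkin's formula in its martingale form gives $g(X_t)=g(X_0)+\int_{0}^{t}\mathcal A g(X_s)\,\dd s+N^{g}_{t}$ with $N^{g}$ a local martingale, and integration by parts applied to the product $e^{-A_t}g(X_t)$ (the covariation term vanishing because $A$ is continuous and of finite variation) shows that
\begin{equation*}
e^{-A_t}g(X_t)-g(X_0)-\int_{0}^{t}e^{-A_s}\bigl(\mathcal A g-qg\bigr)(X_s)\,\dd s
\end{equation*}
is a local martingale. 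Taking $g=v$ and stopping the whole identity at $\tau^{-}_{0}$: for $s<\tau^{-}_{0}$ one has $X_s\geq0$, and at times when $X_s>0$ equation \eqref{feynmankacequation} says exactly that $(\mathcal A v-qv)(X_s)=-(-\Delta)^{\alpha/2}v(X_s)-q(X_s)v(X_s)=0$; since $X_s$ has a bounded density for every $s>0$, the exceptional time set $\{s\geq0:X_s=0\}$ has Lebesgue measure zero almost surely, so the $\dd s$-integral above vanishes identically along $X_{\cdot\wedge\tau^{-}_{0}}$. Thus $Z_t$ is a local martingale for $\{\mathscr{F}_{t\wedge\tau^{-}_{0}}^{x}\}_{t\geq0}$, and being bounded it is a genuine bounded martingale.

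The one real obstacle is the regularity needed to run Dynkin's formula for $v$, which is assumed only bounded and continuous: the pointwise singular integral \eqref{fractionallaplacian} and the compensation of the jumps of $v(X_t)$ require a little H\"older regularity of $v$ at each point of $(0,\infty)$. I would handle this by exploiting the equation itself — on $(0,\infty)$ one has $(-\Delta)^{\alpha/2}v=-qv$ with $qv$ continuous, so interior regularity estimates for the fractional Laplacian bootstrap $v$ to a locally smooth function on the open half-line (the bootstrap being especially transparent in the situation of interest, where $q$ is a constant multiple of $v$, so that $q$ and $v$ gain regularity together); one then applies the identity of the previous paragraph after localizing to the exit time $\sigma$ of $[1/n,n]$, where $v$ is smooth, and lets $n\to\infty$, using $\tau^{-}_{0}<\infty$ and dominated convergence. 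Alternatively, the bounded-domain version $v(x)=E^{x}\!\left[e^{-\int_{0}^{\sigma}q(X_s)\,\dd s}v(X_{\sigma})\right]$ can be obtained by comparing $v$ with $x\mapsto E^{x}\!\left[e^{-\int_{0}^{\sigma}q(X_s)\,\dd s}v(X_{\sigma})\right]$, both being solutions of the same linear nonlocal Dirichlet problem on $[1/n,n]$, via a maximum principle in the spirit of Proposition~\ref{comparisonprinciple}.

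Finally, with $Z$ a bounded martingale possessing the a.s.\ limit $Z_{\infty}=Z_{\tau^{-}_{0}}$, the Optional Stopping Theorem applies at any stopping time $\tau\leq\tau^{-}_{0}$ and yields $v(x)=Z_0=E^{x}[Z_{\tau}]$. Since $\tau\wedge\tau^{-}_{0}=\tau$, we have $Z_{\tau}=\exp\bigl(-\int_{0}^{\tau}q(X_s)\,\dd s\bigr)v(X_{\tau})$, which is precisely \eqref{feynmankacformula}.
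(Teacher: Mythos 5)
Your argument is correct in outline, but it is a genuinely different route from the paper's: the paper does not prove Theorem~\ref{feynmankac} at all, instead observing that the symmetric $\alpha$-stable process is Feller and invoking the general Feynman--Kac formula for Feller processes from the literature (Rogers--Williams). What your approach buys is a self-contained derivation: the Dynkin-formula-plus-integration-by-parts computation showing that $e^{-A_t}g(X_t)-\int_0^t e^{-A_s}(\mathcal{A}g-qg)(X_s)\,\dd s$ is a local martingale is exactly the standard skeleton behind the cited general result, and your observations that $Z$ is uniformly bounded (so local martingale implies martingale) and that $\tau^-_0<\infty$ a.s.\ closes the martingale at infinity (so optional stopping applies to an arbitrary $\tau\leq\tau^-_0$ with no integrability caveats) are both correct and are precisely the points the paper leaves implicit. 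Your localization to the exit times of $[1/n,n]$ and passage to the limit is also the right way to reconcile the fact that \eqref{feynmankacequation} is only assumed on the open half-line with the boundary behavior at $0$.

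The one soft spot is the regularity bootstrap in the generality you state it. The paper defines the domain of $(-\Delta)^{\alpha/2}$ by the pointwise limit \eqref{infinitesimalgenerator}, and your Dynkin/It\^{o} route instead needs enough H\"older regularity for the compensated-jump integral, i.e.\ roughly $|v(x+y)+v(x-y)-2v(x)|\lesssim |y|^{\alpha+\epsilon}$ locally. Interior Schauder estimates give $v\in C^{\alpha+\beta}_{loc}$ only when the right-hand side $qv$ is in $C^{\beta}$ with $\beta>0$; for $q$ merely continuous the gain stalls at the borderline exponent $\alpha$, which is logarithmically insufficient for the singular integral \eqref{fractionallaplacian} and for the It\^{o} compensator. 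This is harmless in the only case the paper uses, $q=u/2$ with $v=u$, where $q$ and $v$ gain regularity together and the bootstrap does reach $C^\infty_{loc}$ as you say; but for the theorem as stated (general continuous $q\geq 0$) you should either add a H\"older hypothesis on $q$, or run your second, purely probabilistic alternative (comparison of $v$ with $x\mapsto E^x[e^{-\int_0^\sigma q(X_s)\,\dd s}v(X_\sigma)]$ on bounded domains), or simply quote the Feller-process Feynman--Kac formula as the paper does, which requires no pointwise regularity of $v$ beyond membership in the generator's domain.
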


The Feynman-Kac formula has been proved to hold  for arbitrary Markov
processes satisfying the Feller property -- see, for instance,
\cite{rogers-williams}, and \cite{kesten} for some of the history of
the formula. Theorem\ref{feynmankac}  is a special case, as
the symmetric $\alpha-$stable process is Feller. 

\begin{cor}
If $u(x)$ is a solution to the boundary value problem
\eqref{initialvalueproblem}, then for any stopping time $\tau \leq
\tau^{-}_{0}$,
\begin{equation}\label{feynmankacformulawithtau}
u(x) = E^x[\exp \bigl ( -\frac{1}{2} \int_0^\tau u(X_s) \,\dd s \bigr ) \cdot u(X_\tau)].
\end{equation}
In particular,
\begin{equation}\label{feynmankacformulaforu}
u(x) = E^x[\exp \bigl ( -\frac{1}{2} \int_0^{\tau^{-}_{0}} u(X_s) \,\dd s \bigr )].
\end{equation}
\end{cor}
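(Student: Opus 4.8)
The plan is to recognize the nonlinear boundary value problem \eqref{initialvalueproblem} as an instance of the linear, eigenvalue-type equation \eqref{feynmankacequation} and then invoke Theorem~\ref{feynmankac} directly. The interior equation $(-\Delta)^{\alpha/2}u(x)+\frac{1}{2}(u(x))^{2}=0$ for $x>0$ is precisely
\begin{equation*}
-(-\Delta)^{\alpha/2}u(x)=q(x)u(x)\quad\text{for }x>0,\qquad q(x):=\tfrac{1}{2}u(x),
\end{equation*}
so I would apply Theorem~\ref{feynmankac} with $v=u$ and $q=u/2$. The hypotheses are immediate: $u$ is continuous on all of $\R$ and satisfies $0<u(x)\le 1$ (both established at the start of section~\ref{section:pseudo-DE}), hence $u$ is bounded and continuous, and therefore $q=u/2$ is nonnegative, continuous, and bounded. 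Theorem~\ref{feynmankac} then yields, for every stopping time $\tau\le\tau^{-}_{0}$, exactly the identity \eqref{feynmankacformulawithtau}.

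To obtain \eqref{feynmankacformulaforu} I would specialize \eqref{feynmankacformulawithtau} to the stopping time $\tau=\tau^{-}_{0}$, which is finite almost surely --- one of the elementary first-passage facts recorded just before Lemma~\ref{reflectionprinciple}, following from the Hewitt--Savage $0$--$1$ law. At this stopping time $X_{\tau^{-}_{0}}\le 0$ by the definition of $\tau^{-}_{0}$, so the boundary condition $u\equiv 1$ on $(-\infty,0]$ from \eqref{initialvalueproblem} gives $u(X_{\tau^{-}_{0}})=1$, and \eqref{feynmankacformulawithtau} collapses to \eqref{feynmankacformulaforu}.

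I do not expect a genuine obstacle: all of the real work sits in Theorem~\ref{feynmankac}, and this corollary amounts to choosing $q=u/2$ and reading off the boundary value at $\tau^{-}_{0}$. The one point worth a sentence is the legitimacy of the optional stopping at the (generally unbounded) time $\tau^{-}_{0}$: since $q\ge 0$ the factor $\exp\bigl(-\frac{1}{2}\int_{0}^{t\wedge\tau^{-}_{0}}u(X_{s})\,\dd s\bigr)$ is bounded by $1$ and $u$ is bounded, so $Z_{t}$ is a \emph{bounded} martingale and optional stopping applies even though $\tau^{-}_{0}$ is not bounded --- but this is already part of the assertion of Theorem~\ref{feynmankac}.
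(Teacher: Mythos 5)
Your proof is correct and matches the paper's own argument exactly: apply Theorem~\ref{feynmankac} with $q=u/2$, then specialize to $\tau=\tau^{-}_{0}$ and use the boundary condition $u\equiv 1$ on $(-\infty,0]$ to get $u(X_{\tau^{-}_{0}})=1$. Your additional remarks verifying the hypotheses and the legitimacy of optional stopping are sound elaborations of what the paper leaves implicit.
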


\begin{proof}
The representation \eqref{feynmankacformulawithtau} directly follows
from Theorem \ref{feynmankac} by setting $q(x) = \frac{1}{2}
u(x)$. The result \eqref{feynmankacformulaforu} follows from setting
$\tau=\tau^{-} (0)$, since $u(X_{\tau_0}) = 1$.
\end{proof}

\subsection{Consequences of the Feynman-Kac
formula}\label{ssec:consequencesFK}

Formula \eqref{feynmankacformulaforu} restricts the decay of $u(x)$
both above and below, because the function $u$ appears on both sides
of \eqref{feynmankacformulaforu} but with opposite signs. When
combined with the \emph{a priori} estimates of
Proposition~\ref{aprioribound}, the integral representation
\eqref{feynmankacformulaforu} will lead to sharp asymptotic estimates,
as we will show in section~\ref{mainresult}. In this section we
collect some preliminary consequences of the representation
\eqref{feynmankacformulaforu}. Henceforth, we will use the notational
shorthand 
\begin{equation}\label{eq:y-t}
	\Psi_{t}= \int_{0}^{t} u(X_{s})
	\,\dd s
\end{equation}
for the path integrals that occur in the Feynman-Kac formulas. 
Recall that for any interval $J$ the random variable $\sigma_{J}$ is
the time of first exit from $J$, and $\nu_{J}$ is the time of the
first jump of size $X_{t}-X_{t-}\in J$.

\begin{prop}\label{proposition:oneBigJump1}
Fix $\delta \in (0,\frac{1}{2})$ and abbreviate $\sigma
=\sigma_{(x-\delta x,x+\delta x)}$. For all sufficiently small
$\delta$, as $x \rightarrow \infty$,
\begin{equation}\label{eq:oneBigJump1}
	E^{x} \exp \left\{-\Psi_{\sigma}/2\right\} u (X_{\sigma
	})\mathds{1} \{X_{\sigma}\geq \delta x \}=o (u (x)),
\end{equation}
and consequently,
\begin{equation}\label{eq:BigJumpCor}
	E^{x} \exp \left\{-\Psi_{\sigma}/2\right\}\mathds{1}
	\{X_{\sigma}\geq \delta x \}=o (1). 
\end{equation}
\end{prop}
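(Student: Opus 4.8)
The plan is to prove \eqref{eq:oneBigJump1} by splitting the exit event according to whether $\sigma$ is \emph{small} (at most of order $x^{\alpha/2}$) or \emph{large}, the key point being that $x^{\alpha/2}$ is the time scale on which the Feynman--Kac weight $\exp\{-\Psi_{\sigma}/2\}$ passes from order $1$ to exponentially small, whereas the natural (diffusive) exit time from an interval of half-width $\delta x$ is of the \emph{much} larger order $(\delta x)^{\alpha}$. Since $x-\delta x>0$ we have $\sigma\le\tau^{-}_{0}$, so \eqref{feynmankacformulawithtau} gives the consistency relation $u(x)=E^{x}[\exp\{-\Psi_{\sigma}/2\}\,u(X_{\sigma})]$, and it suffices to bound the restriction of this expectation to $\{X_{\sigma}\ge\delta x\}$. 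Fix a large constant $K$ and put $T=Kx^{\alpha/2}$.

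On $\{\sigma\le T\}$ I would simply bound $\exp\{-\Psi_{\sigma}/2\}\le 1$ and invoke the \emph{a priori} upper bound of Proposition~\ref{aprioribound} in the form $u(X_{\sigma})\le C_{2}(X_{\sigma})^{-\alpha/2}\le C_{2}(\delta x)^{-\alpha/2}$, valid on $\{X_{\sigma}\ge\delta x\}$ for $x$ large. Thus this ``early'' piece is at most $C_{2}(\delta x)^{-\alpha/2}\,P^{x}\{\sigma\le T\}$, and by the reflection principle (Lemma~\ref{reflectionprinciple}), symmetry, the scaling law and the regular variation of the $\alpha$-stable tail,
$$P^{x}\{\sigma\le T\}\;\le\;4\,P^{0}\{X_{T}\ge\delta x\}\;=\;4\,P\{X_{1}\ge\delta x\,T^{-1/\alpha}\}\;\sim\;4\kappa\,\delta^{-\alpha}K\,x^{-\alpha/2},$$
since $\delta x\,T^{-1/\alpha}=\delta K^{-1/\alpha}x^{1/2}\to\infty$. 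Hence the early piece is $O_{\delta,K}(x^{-\alpha})$, which is $o(u(x))$ because $u(x)\ge C_{1}x^{-\alpha/2}$.

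On $\{\sigma>T\}$ the path stays in $(x-\delta x,x+\delta x)$ throughout $[0,T]$, so the \emph{a priori} lower bound gives $u(X_{s})\ge C_{1}(x+\delta x)^{-\alpha/2}$ for $s\le T$, whence $\Psi_{\sigma}\ge C_{1}(1+\delta)^{-\alpha/2}K$ and $\exp\{-\Psi_{\sigma}/2\}\le e^{-cK}$ with $c:=\tfrac12 C_{1}(1+\delta)^{-\alpha/2}>0$. Writing $\{X_{\sigma}\ge\delta x\}$ as the disjoint union of the downward exit $\{\delta x\le X_{\sigma}\le x-\delta x\}$, on which $u(X_{\sigma})\le C_{2}(\delta x)^{-\alpha/2}$, and the upward exit $\{X_{\sigma}\ge x+\delta x\}$, which by symmetry has probability $\tfrac12$ and on which $u(X_{\sigma})\le C_{2}x^{-\alpha/2}$, one gets $E^{x}[u(X_{\sigma})\mathds{1}\{X_{\sigma}\ge\delta x\}]\le C_{2}(\delta^{-\alpha/2}+\tfrac12)x^{-\alpha/2}$, so this ``late'' piece is at most a constant multiple of $e^{-cK}u(x)$. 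Adding the two pieces, $\limsup_{x\to\infty}u(x)^{-1}E^{x}[\exp\{-\Psi_{\sigma}/2\}u(X_{\sigma})\mathds{1}\{X_{\sigma}\ge\delta x\}]\le C_{2}C_{1}^{-1}(\delta^{-\alpha/2}+\tfrac12)e^{-cK}$; as $K$ is arbitrary and $c$ does not depend on $K$, letting $K\to\infty$ gives \eqref{eq:oneBigJump1}. To deduce \eqref{eq:BigJumpCor} I would fix a large $K'$: on $\{\delta x\le X_{\sigma}\le K'x\}$ the a priori lower bound gives $u(X_{\sigma})\ge C_{1}(K'x)^{-\alpha/2}$, so $\mathds{1}\{\delta x\le X_{\sigma}\le K'x\}\le C_{1}^{-1}(K'x)^{\alpha/2}u(X_{\sigma})\mathds{1}\{X_{\sigma}\ge\delta x\}$ and \eqref{eq:oneBigJump1} (with $u(x)\asymp x^{-\alpha/2}$) forces $E^{x}[\exp\{-\Psi_{\sigma}/2\}\mathds{1}\{\delta x\le X_{\sigma}\le K'x\}]\to 0$; on $\{X_{\sigma}>K'x\}$ one bounds the weight by $1$ and uses scaling to get $P^{x}\{X_{\sigma}>K'x\}=P^{0}\{X_{\sigma_{(-1,1)}}>(K'-1)/\delta\}$, independent of $x$ and tending to $0$ as $K'\to\infty$; letting $K'\to\infty$ yields \eqref{eq:BigJumpCor}.

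The crux, and the reason neither bound can be dispensed with, is the choice of the two time scales: dropping the weight everywhere is fatal because the \emph{unweighted} probability of leaving $(x-\delta x,x+\delta x)$ upward, or downward into $[\delta x,x-\delta x]$, is of order $1$ (and even under an early-exit constraint only $\asymp x^{-\alpha/2}$, i.e.\ merely $O(u(x))$, not $o(u(x))$), so one genuinely needs the exponential decay of $\exp\{-\Psi_{\sigma}/2\}$ on the diffusive scale $(\delta x)^{\alpha}\gg x^{\alpha/2}$ to kill the late exits; conversely the early exits survive precisely because their probability $\asymp x^{-\alpha/2}$, once multiplied by $u(X_{\sigma})\le C_{2}(\delta x)^{-\alpha/2}$, is already $O(x^{-\alpha})=o(u(x))$. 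The only step requiring a genuine computation is the short-time exit estimate $P^{x}\{\sigma\le T\}=O(x^{-\alpha/2})$ with the correct power, which is where Lemma~\ref{reflectionprinciple} and the regular variation of the $\alpha$-stable tail come in.
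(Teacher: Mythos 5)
Your proof is correct and follows essentially the same route as the paper's: both split on whether the exit time $\sigma$ exceeds the critical scale $\approx x^{\alpha/2}$, using the exponential Feynman--Kac weight (via the \emph{a priori} lower bound on $u$ and monotonicity) to kill the late exits, and the smallness of $P^x\{\sigma \lesssim x^{\alpha/2}\}$ (a scaling consequence, which you quantify via the reflection principle) to kill the early ones. The only minor differences are your fixed threshold $Kx^{\alpha/2}$ followed by $K\to\infty$ in place of the paper's growing threshold $x^{\alpha/2+\eta}$, and your somewhat more roundabout deduction of \eqref{eq:BigJumpCor}, which the paper obtains directly since its argument in fact shows $E^{x}e^{-\Psi_{\sigma}/2}=o(1)$.
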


\begin{proof}
The monotonicity of $u$ and the \emph{a priori} bounds
\eqref{aprioriboundforu} imply that the ratio $u (X_{\sigma})/u (x)$
remains bounded above on the event $X_{\sigma}\geq \delta x$ by a
constant $C=C_{\delta}<\infty$ depending on $\delta >0$ but not on
$x$. Hence, the second relation \eqref{eq:BigJumpCor} will follow from
the first relation \eqref{eq:oneBigJump1}. Now consider the
exponential $\exp \{-\Psi_{\sigma}/2 \}$.  The integrand $u (X_{s})$ is
bounded below by $u (x+\delta x)$ up to time $\sigma$, by the
monotonicity of $u$, and so by the \emph{a priori} bounds
\eqref{aprioriboundforu}, with $C=C_{\delta}$ as above,
\[
	\Psi_{\sigma}\geq C\sigma x^{-\alpha /2}.
\]
Hence, on the event $\sigma >x^{\alpha /2+\eta}$ the exponential
$e^{-\Psi_{\sigma}/2}$ will be bounded above by  $\exp \{-Cx^{\eta} \}=o
(u (x))$. On the other hand, the scaling law \eqref{scalingproperty}
implies that the distribution of $\sigma /x^{\alpha}$ under $P^{x}$ is
the same as that of $\sigma$ under $P^{1}$, so as $x \rightarrow
\infty$ the probability that $\sigma \leq x^{\alpha /2+\eta}$
converges to zero, for any $\eta <\alpha /2$. Thus,
\begin{align*}
	E^{x}e^{-\Psi_{\sigma}/2} u (X_{\sigma })\mathds{1}
	\{X_{\sigma}\geq \delta x \} &\leq Cu (x)
	E^{x}e^{-\Psi_{\sigma}/2} \\
	&\leq  Cu (x) E^{x}e^{-\Psi_{\sigma}/2} (\mathds{1}\{\sigma \geq
	x^{\alpha /2+\eta} \}+\mathds{1}\{\sigma < x^{\alpha
	/2+\eta} \} )\\
	&=Cu (x) (o (1)+o (1)).
\end{align*}
\end{proof}

Proposition \ref{proposition:oneBigJump1} implies that for large $x$
the expectation in the Feynman-Kac formula
\eqref{feynmankacformulawithtau}, with $\tau =\sigma$, is dominated by
those sample paths that exit the interval $(x-\delta x,x+\delta x)$ by
jumping to the interval $(-\infty , \delta x)$. The next result
asserts that the relative contribution from those paths for which the
jump lands somewhere in $(-\delta x,\delta x)$ is small.

\begin{prop}\label{proposition:oneBigJump2}
For each $\varepsilon>0$ there exists $\delta >0$ such that 
for all sufficiently large $x$
\begin{equation}\label{eq:oneBigJump2}
	E^{x}\exp \left\{ -\Psi_{\sigma}/2 \right\} \mathds{1} \{X_{\sigma}\in [-2\delta
	x,2\delta x] \}\leq \varepsilon u (x),
\end{equation}
where $\sigma =\sigma_{(x-\delta x,x+\delta x)}$.
\end{prop}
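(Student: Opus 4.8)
The plan is to show that the contribution to the Feynman--Kac expectation from sample paths that leave $(x-\delta x, x+\delta x)$ by jumping into the central region $[-2\delta x, 2\delta x]$ is small relative to $u(x)$, by splitting according to the value of $X_{\sigma}$ and using the \emph{a priori} bounds together with a renewal/Markov argument at the exit time. First I would decompose the event $\{X_{\sigma}\in[-2\delta x,2\delta x]\}$ into the two pieces $\{X_{\sigma}\le 0\}$ and $\{0<X_{\sigma}\le 2\delta x\}$. On the first piece, $u(X_{\sigma})\le 1$, and the exponential factor is controlled exactly as in Proposition~\ref{proposition:oneBigJump1}: since $u(X_s)\ge u(x+\delta x)\ge c\,x^{-\alpha/2}$ for $s<\sigma$ by monotonicity and \eqref{aprioriboundforu}, one has $\Psi_{\sigma}\ge c\,\sigma x^{-\alpha/2}$, so $e^{-\Psi_{\sigma}/2}=o(u(x))$ on $\{\sigma\ge x^{\alpha/2+\eta}\}$, while $P^{x}\{\sigma< x^{\alpha/2+\eta}\}\to 0$ by the scaling law. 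Hence the $\{X_{\sigma}\le 0\}$ contribution is already $o(u(x))$, which is $\le \varepsilon u(x)$ for large $x$, with \emph{no} smallness of $\delta$ needed there.

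The substantive piece is $\{0<X_{\sigma}\le 2\delta x\}$, where $u(X_{\sigma})$ can be as large as $u(X_{\sigma})\le C_{2}(X_{\sigma})^{-\alpha/2}$, which for $X_{\sigma}$ of order $\delta x$ is of order $\delta^{-\alpha/2}u(x)$ — a \emph{large} multiple of $u(x)$, so the crude bound $u(X_{\sigma})\mathds 1\{\cdots\}\le C\,u(x)$ is not enough and the exponential damping must be exploited. Here I would condition on $\mathscr{F}_{\sigma}$ and re-apply the Feynman--Kac identity \eqref{feynmankacformulawithtau} started afresh at the point $y=X_{\sigma}\in(0,2\delta x]$: by the strong Markov property, $E^{y}[\exp(-\Psi_{\tau^{-}_{0}}/2)]=u(y)$, and more relevantly the quantity $E^{x}[\exp(-\Psi_{\sigma}/2)\,u(X_{\sigma})\mathds 1\{X_{\sigma}\in(0,2\delta x]\}]$ is bounded above by $E^{x}[\exp(-\Psi_{\sigma}/2)\,u(X_{\sigma})\mathds 1\{X_{\sigma}\le x+\delta x\}]$, and by the martingale property this whole object, summed over the complementary events, equals $u(x)$. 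So it suffices to show that the \emph{other} contributions to $u(x)=E^{x}[e^{-\Psi_{\sigma}/2}u(X_{\sigma})]$ are \emph{not} asymptotically negligible — i.e. that the paths which jump to the far right ($X_{\sigma}\ge x+\delta x$, contributing $o(u(x))$ by Proposition~\ref{proposition:oneBigJump1}) plus the central paths cannot together exhaust the mass. Equivalently: the contribution from $X_{\sigma}\le 0$ must be at least $(1-\varepsilon')u(x)$ for some $\varepsilon'\to 0$ as $\delta\to 0$. Thus the real target is the lower bound
\begin{equation*}
E^{x}\!\left[\exp\{-\Psi_{\sigma}/2\}\,\mathds 1\{X_{\sigma}\le 0\}\right]\ \ge\ (1-\varepsilon)\,u(x),
\end{equation*}
after which $\eqref{eq:oneBigJump2}$ follows by subtraction from the identity $u(x)=E^{x}[e^{-\Psi_{\sigma}/2}u(X_{\sigma})]$ and the fact that $u(X_{\sigma})=1$ there while $u\le 1$ on $X_{\sigma}\le 0$ and the far-right term is $o(u(x))$.

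To get that lower bound I would use the point-process structure of $X$: up to time $\sigma$ the path is confined to $(x-\delta x,x+\delta x)$, so $\Psi_{\sigma}\le \sigma\cdot u(x-\delta x)\le C\sigma x^{-\alpha/2}$ by \eqref{aprioriboundforu}, and on the typical event $\{\sigma\le K x^{\alpha}\}$ this is bounded by $CK x^{\alpha/2}$ — which is \emph{not} small, so naive bounding of $e^{-\Psi_{\sigma}/2}$ fails. The correct route is to scale: under $P^{x}$, by \eqref{scalingproperty}, the pair $(\sigma/x^{\alpha},\,X_{\sigma}/x)$ has the same law as $(\sigma_{(1-\delta,1+\delta)},\,X_{\sigma_{(1-\delta,1+\delta)}})$ under $P^{1}$, and $\Psi_{\sigma}/2 = \tfrac12\int_0^{\sigma}u(X_s)\,ds$; writing $u(X_s)\approx x^{-\alpha/2}$ up to bounded factors and $s=x^{\alpha}r$, one gets $\Psi_{\sigma}/2\approx \tfrac12 x^{\alpha/2}\int_0^{\sigma_1}u_1(\cdot)\,dr$ where the scaled integrand is of order $1$ — so $e^{-\Psi_\sigma/2}$ is exponentially small unless $\sigma$ is itself of order $x^{\alpha/2}$ or smaller, i.e. unless the exit happens by a single large jump on a short time scale, which is exactly the regime analysed in Lemmas~\ref{lemma:shortTimeAsymptotics}--\ref{lemma:jumpIndependence} and Corollary~\ref{corollary:jumpIndependence}. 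On that short-time-scale event the jump that produces the exit is, by Lemma~\ref{lemma:jumpIndependence}, independent of the path before it and has the stable jump law; the probability that such a jump lands in $[-2\delta x, 2\delta x]$ rather than in $(-\infty,0)$ is, by Corollary~\ref{corollary:jumpIndependence} and the explicit Lévy measure $\lambda(dy)=|y|^{-1-\alpha}dy$, of order $\delta$ (a jump of size $\asymp x$ from a point near $x$ overwhelmingly lands far to the left, in $(-\infty,-x/2)$ say), which $\to 0$ as $\delta\to 0$ uniformly in $x$. I expect the main obstacle to be making the scaling heuristic ``$e^{-\Psi_\sigma/2}$ forces $\sigma$ small'' rigorous without circularity — one must handle the range where $\sigma$ is moderate (between $x^{\alpha/2+\eta}$ and $x^{\alpha}$) carefully, combining the exponential bound $e^{-\Psi_\sigma/2}\le e^{-cx^{\eta}}$ on $\{\sigma\ge x^{\alpha/2+\eta}\}$ with the vanishing probability $P^{x}\{\sigma<x^{\alpha/2+\eta}\}\to 0$ exactly as in Proposition~\ref{proposition:oneBigJump1} — and then, on the genuinely short-time event, invoking Lemma~\ref{lemma:jumpIndependence} and Corollary~\ref{corollary:jumpIndependence} to show the landing point is in $(-\infty,0)$ with conditional probability $1-O(\delta)$.
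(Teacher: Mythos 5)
Your final paragraph does contain the germ of the correct mechanism --- the exit into $[-2\delta x,2\delta x]$ must occur by a single jump of size close to $-x$, that jump is independent of the past by Lemma~\ref{lemma:jumpIndependence}, and the L\'evy measure assigns the window $[-2\delta x,2\delta x]$ only an $O(\delta)$ fraction of the mass landing in $(-\infty,0)$ --- but the architecture you build around it has two genuine gaps. First, your disposal of the piece $\{X_{\sigma}\le 0\}$ conflates $o(1)$ with $o(u(x))$: the argument ``$e^{-\Psi_{\sigma}/2}\le e^{-cx^{\eta}}$ on $\{\sigma\ge x^{\alpha/2+\eta}\}$ while $P^{x}\{\sigma<x^{\alpha/2+\eta}\}\to 0$'' yields only $E^{x}e^{-\Psi_{\sigma}/2}=o(1)$, and in fact $P^{x}\{\sigma<x^{\alpha/2+\eta}\}\asymp_{\delta}x^{\eta-\alpha/2}\gg u(x)$. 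Since $E^{x}e^{-\Psi_{\sigma}/2}\ge E^{x}[e^{-\Psi_{\sigma}/2}u(X_{\sigma})]=u(x)$ by \eqref{feynmankacformulawithtau}, your reasoning, applied verbatim to the unrestricted expectation, would give $u(x)=o(u(x))$; it cannot establish the claimed $o(u(x))$ bound for the $\{X_{\sigma}\le 0\}$ contribution.

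Second, the subtraction strategy for $\{0<X_{\sigma}\le 2\delta x\}$ controls the wrong quantity. Subtracting a lower bound for the $\{X_{\sigma}\le 0\}$ term from the identity $u(x)=E^{x}[e^{-\Psi_{\sigma}/2}u(X_{\sigma})]$ bounds $E^{x}[e^{-\Psi_{\sigma}/2}\,u(X_{\sigma})\,\mathds{1}\{0<X_{\sigma}\le 2\delta x\}]$, whereas the proposition requires the bare indicator; on this event $u(X_{\sigma})\asymp(\delta x)^{-\alpha/2}\to 0$ by \eqref{aprioriboundforu}, so converting costs an unbounded factor $\asymp(\delta x)^{\alpha/2}$. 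The paper's proof avoids both problems by arguing directly on the jump: the event $\{X_{\sigma}\in[-2\delta x,2\delta x]\}$ forces $\sigma=\nu$ with $\Delta=X_{\nu}-X_{\nu-}\in[-x-4\delta x,-x+4\delta x]$, so by Lemma~\ref{lemma:jumpIndependence} the expectation factors as $E^{x}[e^{-\Psi_{\nu}/2}\mathds{1}\{\sigma=\nu\}]\cdot P^{x}\{\Delta\in[-x-4\delta x,-x+4\delta x]\}$, while Feynman--Kac gives $u(x)\ge E^{x}[e^{-\Psi_{\nu}/2}\mathds{1}\{\sigma=\nu\}]\cdot P^{x}\{\Delta\le -x-\delta x\}$ (such jumps land below $0$, where $u=1$); the ratio of the two jump probabilities is $O(\delta)$ uniformly in $x$. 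Your detour through the short-time lemmas is also delicate on uniformity grounds, since those are stated for fixed intervals as $\varepsilon\to 0$, whereas the relevant exit times here are $\asymp x^{\alpha/2}\to\infty$.
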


\begin{proof}
If $\delta <1/4$ then the event $X_{\sigma}\in [-2\delta x,2\delta x]$
can only occur if there is a jump of size $\Delta =X_{\sigma}-X_{\sigma -}<-x+3\delta x$ at time
$\sigma$. Moreover, because $X_{\sigma -}\in [x-\delta
\mathcal{X}+\delta x]$, the jump must be the \emph{first} jump of
magnitude more than $2\delta x$, and so $\sigma =\nu $, where $\nu
=\nu_{(-\infty ,-x+3\delta x)}$.  In order that
$X_{\sigma}\in [-2\delta x,2\delta x]$, 
the size of the jump
must satisfy 
\[
	\Delta \in [-x-4\delta x,-x+4\delta x].
\]
Similarly, if at time $\sigma$ the process $X_{s}$ makes a jump of
size $\Delta <-x-\delta x$ then $X_{\sigma}<0$ and so $\sigma
=\tau^{-} (0)$.

By Lemma~\ref{lemma:jumpIndependence}, the random variable $\Delta$ is
independent of the $\sigma -$algebra $\mathcal{F}_{\nu -}$ under
$P^{x}$, and furthermore the distribution of $\Delta$ is
\[
	P^{x}\{\Delta \leq -x-tx \}=\frac{\lambda (-\infty
	,-x-tx)}{\lambda (-\infty ,-x+2\delta x)} = \left(
	\frac{1-3\delta }{1+t }\right)^{\alpha} .
\]
Hence, since
$\tau^{-} (0)=\nu$ on the event $\{\sigma =\nu  \}\cap \{\Delta\leq
-x-\delta x \}$, the Feynman-Kac formula \eqref{feynmankacformulaforu}
implies that
\begin{align*}
	u (x)&\geq E^{x} (\exp \{-\Psi_{\sigma}/2 \} \mathds{1}\{\sigma =\nu  \}
	\mathds{1}\{\Delta\leq -x-\delta x \} )  \\
	&=E^{x} ( \exp \{-\Psi_{\sigma}/2 \} \mathds{1}\{\sigma =\nu  \})
	P^{x}\{\Delta\leq -x-\delta x \}  .
\end{align*}
But the independence of $\Delta$ and $\mathcal{F}_{\nu -}$ also implies that 
\begin{align*}
	&E^{x} (\exp \{-\Psi_{\sigma}/2 \} \mathds{1}\{\sigma =\nu  \}
	\mathds{1}\{\Delta\in [-x-4\delta x,-x+4\delta x] \} )\\
	=&E^{x} ( \exp \{-\Psi_{\sigma}/2 \} \mathds{1}\{\sigma =\nu  \}) 
	P^{x}\{\Delta\in [-x-4\delta x,-x+4\delta x]\}  ,
\end{align*}
since $\Psi_{\nu}$ is measurable with respect to $\mathcal{F}_{\nu -}$,
so it now follows that
\begin{align*}
	E^{x} (\exp \{-&\Psi_{\sigma}/2 \} \mathds{1} \{X_{\sigma}\in [-2\delta
	x,2\delta x] \;\text{and}\;\sigma  =\nu \}\\
	&\leq  u (x) \cdot\frac{P^{x}\{\Delta\in [-x-4\delta
	x,-x+4\delta x]\}}{P^{x}\{\Delta\leq -x-\delta x \}} .
\end{align*}
The ratio of the two probabilities on the right is $O (\delta)$
(uniformly in $x$, by scaling), so this proves \eqref{eq:oneBigJump2}.
\end{proof}

\begin{prop}\label{uniformratio}
For any $\varepsilon>0$, there exists $\delta>0$ such that for all $x\geq 1$,
\begin{equation}\label{eq:continuity-uniform}
1-\varepsilon
\leq \frac{u((1+\delta)x)}{u(x)} <1.
\end{equation}
\end{prop}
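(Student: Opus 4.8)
The plan is to prove Proposition~\ref{uniformratio} using the Feynman-Kac representation \eqref{feynmankacformulawithtau} with the stopping time $\tau = \sigma = \sigma_{(x-\delta x, x+\delta x)}$, together with the two "one big jump" propositions just established. The upper bound $u((1+\delta)x)/u(x) < 1$ is immediate from monotonicity of $u$ (since $u$ is a tail distribution function, or in the general analytic setting, since strict positivity plus the a priori decay forces strict monotonicity — but the cleanest route is simply that $u$ is nonincreasing and nonconstant on any interval where it is positive; actually for the general statement I would observe that if $u((1+\delta)x) = u(x)$ then $u$ would be constant on $[x,(1+\delta)x]$, forcing $(-\Delta)^{\alpha/2}u > 0$ there, contradicting the equation, so strict inequality holds). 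The content is therefore the lower bound, and the strategy is: start from $x$, run the stable process until it first exits $(x-\delta x, x+\delta x)$, and argue that with overwhelming weight in the Feynman-Kac integral the process exits to the left into $(-\infty, \delta x)$ after staying in a tiny neighborhood of $x$ for a negligible time, so that $u(x)$ is well-approximated by $u$ evaluated at a point near $x-\delta x$ — which differs from $u((1+\delta)x)$ by a bounded factor, and one then chases the $\delta \to 0$ limit.

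**More precisely, I would proceed as follows.** Apply \eqref{feynmankacformulawithtau} with $\tau = \sigma$ to both $u(x)$ and $u((1+\delta)x)$; by the scaling law the exit time $\sigma$ has the same law (up to the scaling $x^\alpha$) under $P^x$ and under $P^{(1+\delta)x}$ relative to its own interval, but the interval centers differ, so a direct comparison is awkward. Instead I would run \emph{only the $u(x)$ side} through $\sigma = \sigma_{(x-\delta x, x+\delta x)}$ and decompose the expectation according to where $X_\sigma$ lands: (i) $X_\sigma \geq \delta x$; (ii) $X_\sigma \in [-2\delta x, 2\delta x]$; (iii) $X_\sigma < -2\delta x$, i.e. the process jumps far to the left. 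By Proposition~\ref{proposition:oneBigJump1}, contribution (i) is $o(u(x))$, and by Proposition~\ref{proposition:oneBigJump2}, contribution (ii) is $\leq \varepsilon' u(x)$ for $\delta$ small. So up to a factor $1 - \varepsilon' - o(1)$, $u(x)$ equals the contribution of paths that exit by a big jump landing below $-2\delta x$. On that event $u(X_\sigma) \leq u(2\delta x + \text{something})$... wait — rather, I want a \emph{lower} bound on $u(x)$ in terms of $u((1+\delta)x)$, so I should bound $u(X_\sigma)$ from below on the good event. But $X_\sigma$ is far to the left where $u$ is \emph{large}, so that direction is free; the real point is to bound $\exp\{-\Psi_\sigma/2\}$ from below and the probability of the good event from below, and to compare $u(x)$ against $u((1+\delta)x)$ directly using that, before exiting, $X_s$ stays in $(x-\delta x, x+\delta x)$, hence $u(X_s) \leq u(x - \delta x) \leq C_\delta u(x)$ by the a priori bounds — so $\Psi_\sigma \leq C_\delta \sigma u(x) \lesssim C_\delta \sigma x^{-\alpha/2}$, and on the event $\sigma \leq x^{\alpha/2 + \eta}$ (which has probability $\to 1$... no wait, $\sigma \sim x^\alpha$ in law, so $\sigma \leq x^{\alpha/2+\eta}$ has probability $\to 0$). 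Let me restructure.

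**The cleaner comparison is this.** Write \eqref{feynmankacformulawithtau} for $u((1+\delta)x)$ with $\tau = \sigma' := \tau^-_{x}$, the first passage of the process \emph{below} level $x$, started from $(1+\delta)x$. Then $u((1+\delta)x) = E^{(1+\delta)x}[\exp\{-\Psi_{\sigma'}/2\} u(X_{\sigma'})]$. On the one hand $u(X_{\sigma'}) \leq u(x)$ when... no, $X_{\sigma'} \leq x$ so $u(X_{\sigma'}) \geq u(x)$, giving $u((1+\delta)x) \geq u(x)\, E^{(1+\delta)x}[\exp\{-\Psi_{\sigma'}/2\}] $ — wrong direction again for what we want, which is $u((1+\delta)x) \geq (1-\varepsilon)u(x)$, i.e. we need $u((1+\delta)x)$ \emph{large}. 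Hmm, actually $u((1+\delta)x) \leq u(x)$ always, and we want it not much smaller — so we need an \emph{upper} bound argument rerouted. Here is the fix: apply \eqref{feynmankacformulawithtau} to $u(x)$ itself with $\tau = \sigma_{(x - \delta x, x + \delta x)}$ and split off the good event $G = \{X_\sigma \leq -2\delta x\} \cap \{\sigma \leq T\}$ for a suitable truncation time $T = \epsilon_\delta \cdot x^\alpha$; on $G$, $\Psi_\sigma \leq C_\delta x^{-\alpha/2} \cdot T = C_\delta \epsilon_\delta x^{\alpha/2}$ which is \emph{not} small — so I instead truncate at $\sigma \leq x^{\alpha/2 + \eta}$ and accept that $P(G)$ is only bounded below by a constant, not $\to 1$. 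Then $u(x) \leq C_\delta u(x - \delta x)\cdot(1)$... this is circular.

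Given the genuine subtlety here — reconciling directions and truncations — \textbf{the honest plan} is: use \eqref{feynmankacformulawithtau} for \emph{both} $u(x)$ and $u((1+\delta) x)$ with the \emph{same} stopping rule "exit the ball of radius $2\delta x$ around the starting point", couple the two stable processes to agree (they start $\delta x$ apart, comparable to the ball radius, so with probability bounded below they exit at nearly the same place and time), apply Propositions~\ref{proposition:oneBigJump1} and~\ref{proposition:oneBigJump2} to discard the sample paths where the exit is "upward" or "short-range", and on the dominant event ("one big jump far to the left") use that $X_{\sigma}$ and $X'_{\sigma'}$ land within $O(\delta x)$ of each other so that $u(X_\sigma)/u(X'_{\sigma'})$ is pinned to $1 + O(\delta)$ by the a priori bounds \eqref{aprioriboundforu} and monotonicity, while $\Psi_\sigma$ and $\Psi'_{\sigma'}$ are both $\leq C_\delta \sigma x^{-\alpha/2}$ and hence differ by $O(\delta)$ in the exponent with overwhelming probability (conditioning on $\sigma \leq x^{\alpha/2+\eta}$ fails, so instead note the \emph{ratio} $\exp\{-(\Psi_\sigma - \Psi'_{\sigma'})/2\}$ is what matters and $\Psi_\sigma - \Psi'_{\sigma'}$ is controlled by $\int_0^\sigma |u(X_s) - u(X'_s)|\,\dd s$ along the coupling, which is small since the paths are within $O(\delta x)$ and $u$ is slowly varying by \eqref{aprioriboundforu}). \textbf{The main obstacle} is precisely this last point: making the coupling of the two stable processes and the resulting control of $\Psi_\sigma - \Psi'_{\sigma'}$ rigorous — either via a clean pathwise coupling (translate one process by $\delta x$ and use that the translated exit time/location nearly matches, with error events of small probability absorbed into $\varepsilon$), or more cheaply by avoiding coupling entirely: bound $u((1+\delta)x)$ below by restricting the Feynman-Kac integral for $u((1+\delta)x)$, with $\tau = \sigma_{((1+\delta)x - \delta x, (1+\delta)x + \delta x)} = \sigma_{(x, (1+2\delta)x)}$, to the event that the process exits downward through $x$ by one big jump to $(-\infty, 0)$, on which $u(X_\tau) = 1 = u(X_\tau)$ for the corresponding event in the $u(x)$ formula (both hit the boundary!), and $\exp\{-\Psi_\tau/2\} \geq \exp\{-C_\delta \tau x^{-\alpha/2}/2\}$ — then $u((1+\delta)x) \geq E^{(1+\delta)x}[\exp\{-C_\delta \tau x^{-\alpha/2}/2\}\mathds{1}\{\tau = \tau^-_0,\ \tau \leq x^{\alpha/2+\eta}\}]$ and a matching upper bound $u(x) \leq (1 + \varepsilon')\cdot(\text{same type of quantity from }\eqref{feynmankacformulaforu}\text{ via Propositions~\ref{proposition:oneBigJump1}--\ref{proposition:oneBigJump2}})$, after which the scaling law \eqref{scalingproperty} makes the two truncated expectations asymptotically equal as $x \to \infty$ up to $O(\delta)$, and letting $\delta \to 0$ (with $\varepsilon'$ a function of $\delta$ tending to $0$) finishes. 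I would write up the second route, as it sidesteps coupling at the cost of carefully matching the "one big jump to the left of $0$" events on the two sides.
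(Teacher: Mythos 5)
Your final plan (the ``second route'') has a genuine gap, and it is exactly at the point you flag as the main obstacle. When you bound the exponential from below by $\exp\{-C_\delta\tau x^{-\alpha/2}/2\}$, you have replaced $u(X_s)$ in the exponent by the \emph{a priori} upper bound $C_2x^{-\alpha/2}$ from \eqref{aprioriboundforu}. But the Feynman--Kac expectation is sensitive to the constant in the exponent at precisely the order that matters here: since $\nu$ is exponential with rate $\rho\sim Cx^{-\alpha}$, one has $E[e^{-a\nu}]=\rho/(\rho+a)\approx\rho/a$ in the relevant regime, so replacing $a=u(x)/2$ by $a=C_2x^{-\alpha/2}/2$ changes the expectation by the factor $u(x)/(C_2x^{-\alpha/2})$, which the \emph{a priori} bounds only control up to the unknown constant $C_1/C_2$. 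Your comparison therefore yields $u((1+\delta)x)\geq c\,u(x)$ for some fixed $c\in(0,1)$, not $\geq(1-\varepsilon)u(x)$. The coupling route suffers from the same circularity: to control $\Psi_\sigma-\Psi'_{\sigma'}$ you invoke that ``$u$ is slowly varying,'' i.e.\ that $u(y+\delta x)/u(y)\approx 1$ for $y\approx x$ --- which is the statement being proved; the \emph{a priori} bounds give only a constant-factor comparison, and since $\Psi_\sigma$ is typically of order $x^{\alpha/2}$, a constant-factor discrepancy in the integrand destroys the ratio of exponentials.

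The idea you are missing is that no quantitative control of $u(X_s+\delta x)-u(X_s)$ is needed, only its \emph{sign}. Use the translation coupling exactly: under $P^{(1+\delta)x}$ with $\tau=\tau^-(\delta x)$, the Feynman--Kac formula \eqref{feynmankacformulawithtau} becomes
\[
u((1+\delta)x)=E^{x}\Bigl[\exp\Bigl(-\tfrac12\int_0^{\tau^-(0)}u(X_s+\delta x)\,\dd s\Bigr)\,u\bigl(X_{\tau^-(0)}+\delta x\bigr)\Bigr].
\]
Monotonicity of $u$ gives $u(X_s+\delta x)\leq u(X_s)$ pathwise, so the exponential factor is \emph{at least} $\exp\{-\Psi_{\tau^-(0)}/2\}$ on every path --- no a priori bound enters the exponent at all. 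The only loss is in the terminal value: $u(X_{\tau^-(0)}+\delta x)=1$ unless $X_{\tau^-(0)}\in(-\delta x,0]$, so $u((1+\delta)x)/u(x)\geq 1-E^x[\exp\{-\Psi_{\tau^-(0)}/2\}\mathds{1}\{X_{\tau^-(0)}\in(-\delta x,0]\}]/u(x)$, and that last term is $\leq\varepsilon$ by Proposition~\ref{proposition:oneBigJump2} --- which you correctly identified as the relevant tool, but deployed along a route that cannot reach the sharp constant.
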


\begin{proof}
The law of the symmetric $\alpha-$stable process $X_{s}$ under
$P^{x+\delta x}$ is the same as that of $X_{s}+\delta x$ under
$P^{x}$, and the first passage time $\tau^{-} (\delta x)$ under
$P^{x+\delta x}$ is the same as that of $\tau^{-} (0)$ under $P^{x}$. Hence,
by the Feynman-Kac
formula \eqref{feynmankacformulawithtau},
\begin{equation*}
\begin{aligned}
\frac{u((1+\delta)x)}{u(x)} & = \frac{E^{(1+\delta)x}[\exp \{-\Psi_{\tau^{-} (\delta x)}/2 \}  \cdot u(X_{\tau^{-} (\delta x)})]}{u(x)}\\ 
& = \frac{E^{x}[\exp \bigl ( -\frac{1}{2} \int_0^{\tau^{-}(0)} u(X_s+\delta x) \,\dd s \bigr ) \cdot u(X_{\tau^{-}(0)}+\delta x)]}{u(x)}\\
& \geq \frac{E^{x}[\exp \bigl ( -\frac{1}{2} \int_0^{\tau^{-}(0)} u(X_s) \,\dd s \bigr ) \cdot \mathds{1} \{ X_{\tau^{-}(0)} \leq -\delta x \}]}{E^{x}[\exp \bigl ( -\frac{1}{2} \int_0^{\tau^{-}(0)} u(X_s) \,\dd s \bigr )]}.
\end{aligned}
\end{equation*}
Thus, to prove Proposition \ref{uniformratio} it suffices to show
that for any $\varepsilon>0$ there exists $\delta>0$ such that
\begin{equation*}
	 E^{x}[\exp \{-\Psi_{\tau^{-} (0)}/2 \}\cdot \mathds{1} \{
	 X_{\tau^{-}(0)} \in (-\delta x,0] \}]\leq \;  \varepsilon
	 u(x)  
\end{equation*}
for all sufficiently large  $x$. But this follows directly from
Proposition~\ref{proposition:oneBigJump2}.
\end{proof}

Propositions
\ref{proposition:oneBigJump1}--\ref{proposition:oneBigJump2} imply
that the expectation in the Feynman-Kac formula
\eqref{feynmankacformulaforu} is dominated by paths $X_{s}$ for which
the first escape from $[x-\delta x,x+\delta x]$ coincides with the
first jump of size $<-x+2\delta x$. On this event, the stopping time
$\tau^{-}_{0}$ coincides with the time $\sigma$ of first exit from
$[x-\delta x,x+\delta x]$ and with the time $\nu$ of the first jump of
size $<-x+2\delta x$. The importance of Proposition~\ref{uniformratio}
is that it guarantees that on this event the value of $\Psi_{\tau^{-}
(0)}$ is nearly the same as $\nu u (x)$. Thus, it is not unreasonable
to hope that the Feynman-Kac
expectation \eqref{feynmankacformulaforu} should be well-approximated
by $E^{x}\exp \{-\nu u (x)/2 \}$. Since $\nu$ is the first occurrence
time in a Poisson process of rate 
$\lambda [x-2\delta x,\infty )=Cx^{-\alpha}$, where $C=
(1-2\delta)^{-\alpha}/\alpha$, it is exponentially
distributed and so the latter expectation can be evaluated exactly:
\begin{equation}\label{eq:exactEval}
		E^{x}e^{-\nu u (x) /2} = C/ (C+x^{\alpha}u (x)).
\end{equation}
Given this, Theorem~\ref{mainresult}  will follow, because together
with the Feynman-Kac formula it leads to  the limiting 
relation 
\[
	u (x)\sim C/ (C+x^{\alpha}u (x)),
\]
from which \eqref{mainresultequation} can be easily deduced. 

To justify the replacement of the Feynman-Kac expectation
\eqref{feynmankacformulaforu} by the expectation $E^{x}\exp \{-\nu u
(x)/2 \}$, we must verify that the contribution to this last
expectation from paths for which $\tau^{-} (0)=\sigma =\nu$ does not
hold is small.

\begin{prop}\label{proposition:nu-est}
Fix $\delta >0$ and let $\sigma $ be
the time of first exit from $(x-\delta x,x+\delta x)$ and $\nu $ the
time of the first jump of size $<-x+2\delta x$. 
For any $\varepsilon >0$, if $\delta >0$ is sufficiently small then
as $x \rightarrow \infty$,
\begin{equation}\label{eq:nu-est}
	E^{x}\exp \{- (1+\varepsilon )\nu u (x) /2\} \mathds{1}\{\sigma \not =\nu  \} \leq
	o ( u (x)). 
\end{equation}
\end{prop}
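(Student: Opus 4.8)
The plan is to decompose the event $\{\sigma \neq \nu\}$ according to which of the two ways the coincidence can fail: either (i) the process exits $(x-\delta x, x+\delta x)$ through the \emph{right} endpoint, i.e. $X_\sigma \geq x+\delta x$ before any jump of size $<-x+2\delta x$ occurs, or (ii) the process exits through a jump landing in $[-2\delta x, x+\delta x)$, hence in $[-2\delta x, \delta x)$ or (by the already-handled case) near $\delta x$ --- that is, the exit jump is too small to be a ``big jump'' of size $<-x+2\delta x$. On event (i), Proposition~\ref{proposition:oneBigJump1} (specifically \eqref{eq:BigJumpCor}) already tells us that $E^x \exp\{-\Psi_\sigma/2\}\mathds{1}\{X_\sigma \geq \delta x\} = o(1)$; since on $\{\sigma \neq \nu\}$ with a right-exit we have $\nu \geq \sigma$ and $u(X_s) \geq u(x+\delta x) \asymp x^{-\alpha/2}$ up to time $\sigma$, the factor $\exp\{-(1+\varepsilon)\nu u(x)/2\}$ is dominated by $\exp\{-\Psi_\sigma/2\}$ times a harmless constant, so this contribution is $o(1) \cdot u(x) \cdot (\text{bound on } u(X_\nu)/u(x))$. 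The monotonicity and \emph{a priori} bounds keep $u(X_\nu)/u(x)$ controlled on the relevant set, or one simply bounds $u \leq 1$ and absorbs a factor $x^{-\alpha/2} \asymp u(x)$. So case (i) yields $o(u(x))$.

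For case (ii), the exit happens via a jump of size $\Delta = X_\sigma - X_{\sigma-} \in [-x-\delta x, -x+3\delta x)^c$ that nonetheless lands $X_\sigma$ outside $(-\infty, 0)$, forcing $\sigma = \nu_{(-\infty,-x+3\delta x)}^c$-type behavior; more precisely the exit jump is the first jump of magnitude $> 2\delta x$ but is \emph{not} a jump of size $< -x+2\delta x$. This is exactly the scenario analyzed in the proof of Proposition~\ref{proposition:oneBigJump2}: using Lemma~\ref{lemma:jumpIndependence}, the size of the exit jump is independent of $\mathcal{F}_{\nu-}$, and the conditional probability that this jump lands the process at or above $-2\delta x$ rather than below $-x-\delta x$ is $O(\delta)$ uniformly in $x$ by the scaling form of the Lévy measure. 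Running the same factorization as in Proposition~\ref{proposition:oneBigJump2}, but now carrying the extra exponential weight $\exp\{-(1+\varepsilon)\nu u(x)/2\}$ (which is $\mathcal{F}_{\nu-}$-measurable since $\nu$ is) through the independence, one compares against the full Feynman-Kac lower bound $u(x) \geq E^x(\exp\{-\Psi_\sigma/2\}\mathds{1}\{\sigma = \tau_0^-\})$ and extracts a ratio of jump-landing probabilities that is $O(\delta)$. Choosing $\delta$ small then forces this contribution below $\varepsilon u(x)$.

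The main technical point --- and the step I expect to require the most care --- is reconciling the two different exponents that appear: the hypothesis carries $\exp\{-(1+\varepsilon)\nu u(x)/2\}$ while the Feynman-Kac formula \eqref{feynmankacformulaforu} supplies only $\exp\{-\Psi_{\tau_0^-}/2\}$, and on the event $\{\sigma = \tau_0^- = \nu\}$ one has $\Psi_{\tau_0^-} \approx \nu u(x)$ but only up to the multiplicative error controlled by Proposition~\ref{uniformratio}. On the \emph{complement} $\{\sigma \neq \nu\}$ we cannot invoke that approximation directly, so the argument must instead bound $\exp\{-(1+\varepsilon)\nu u(x)/2\}$ from above --- on case (i) by $\exp\{-\Psi_\sigma/2\}$ up to constants (since $\nu \geq \sigma$ there and $u$ is bounded below on $(x-\delta x, x+\delta x)$), and on case (ii) by $1$, using instead the smallness of the jump-landing probability. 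Keeping the various $\asymp$ constants uniform in $x$ (which is where the \emph{a priori} bounds of Proposition~\ref{aprioribound} and the scaling law are essential) and verifying that the extra factor $1+\varepsilon$ in the exponent does no harm --- it only helps, as it makes the exponential smaller --- will be the bulk of the bookkeeping.
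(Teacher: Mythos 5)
There is a genuine gap, and it is precisely at the point where the conclusion must be $o(u(x))$ rather than merely $o(1)$. On the event $\{\sigma\neq\nu\}$ your bound is, in essence, $\exp\{-(1+\varepsilon)\nu u(x)/2\}\leq\exp\{-(1+\varepsilon)\sigma u(x)/2\}\leq\exp\{-\Psi_\sigma/2\}$ (valid by $\nu\geq\sigma$ and Proposition~\ref{uniformratio}), followed by an appeal to \eqref{eq:BigJumpCor}. But \eqref{eq:BigJumpCor} gives only $E^x e^{-\Psi_\sigma/2}\mathds{1}\{X_\sigma\geq\delta x\}=o(1)$; the stronger bound \eqref{eq:oneBigJump1} is $o(u(x))$ only because of the extra factor $u(X_\sigma)$ inside that expectation, and no such factor is present in \eqref{eq:nu-est}. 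Your phrase ``this contribution is $o(1)\cdot u(x)\cdot(\text{bound on }u(X_\nu)/u(x))$'' inserts a factor $u(x)$ that the quantity being estimated does not contain. The missing idea is that the needed factor of $u(x)$ comes from the \emph{residual} waiting time: on $\{\sigma<\nu\}$ the variable $\nu-\sigma$ is independent of $\mathcal{F}_\sigma$ and has the same exponential distribution as $\nu$ (memorylessness of the Poisson point process of big jumps), so
\begin{equation*}
E^x\bigl[e^{-(1+\varepsilon)\nu u(x)/2}\mathds{1}\{\sigma\neq\nu\}\bigr]
= E^x\bigl[e^{-(1+\varepsilon)\nu u(x)/2}\bigr]\cdot E^x\bigl[e^{-(1+\varepsilon)\sigma u(x)/2}\mathds{1}\{\sigma\neq\nu\}\bigr],
\end{equation*}
and the first factor equals $C/(C+(1+\varepsilon)x^\alpha u(x))=O(x^{-\alpha}/u(x))=O(u(x))$ by the \emph{a priori} lower bound $u(x)\geq C_1x^{-\alpha/2}$. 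That exact evaluation is what upgrades the $o(1)$ from \eqref{eq:BigJumpCor} to the required $o(u(x))$; without it your argument stalls at $o(1)$.

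A secondary issue: your case decomposition is off. On $\{\sigma\neq\nu\}$ a left exit not caused by a jump of size $<-x+2\delta x$ lands at $X_\sigma=X_{\sigma-}+\Delta\geq(x-\delta x)+(-x+2\delta x)=\delta x$, so in \emph{every} instance of $\{\sigma\neq\nu\}$ one has $X_\sigma\geq\delta x$; there is no sub-case with $X_\sigma\in[-2\delta x,\delta x)$. Consequently the machinery of Proposition~\ref{proposition:oneBigJump2} (which concerns landings in $[-2\delta x,2\delta x]$, necessarily produced by big jumps, and is used elsewhere to handle $\{\sigma=\nu\neq\tau^-_0\}$) is not the right tool here, and the ``$O(\delta)$ jump-landing ratio'' does not apply: jumps of intermediate size that exit the interval to the left are \emph{more} frequent than big jumps, not less. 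The single event $\{X_\sigma\geq\delta x\}$ together with the factorization above is all that is needed.
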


\begin{proof}
The event $\{\sigma \not =\nu \}$ can occur only if $\sigma <\nu$ and
$X_{\sigma}\geq \delta x$. Moreover, by the strong Markov property for
the underlying Poisson point process, the conditional distribution of
the residual waiting time $\nu -\sigma$ given $\mathcal{F}_{\sigma}$
on the event $\{\sigma \not =\nu \}$ is the same as the
\emph{unconditional} distribution of $\nu$, and so 
\begin{align*}
	E^{x}e^{- (1+\varepsilon )\nu u (x) /2}  \mathds{1} \{\sigma \not =\nu \} &\leq
	E^{x}e^{- (1+\varepsilon ) (\nu -\sigma) u (x) /2} e^{-
	(1+\varepsilon )\sigma u (x)/2} \mathds {1} \{\sigma \not =\nu \} \\
	&=E^{x}e^{-(1+\varepsilon )\nu u (x) /2} E^{x}
	e^{-(1+\varepsilon )\sigma u (x)/2}\mathds{1} \{\sigma \not =\nu
	\} \\
	&\leq E^{x}e^{-(1+\varepsilon )\nu u (x) /2} E^{x}
	e^{-\Psi_{\sigma}/2} \mathds{1} \{\sigma \not =\nu\} \\
	&\leq   E^{x}e^{-(1+\varepsilon )\nu u (x) /2} E^{x}
	e^{-\Psi_{\sigma}/2} \mathds{1} \{X_{\sigma } \geq \delta x \}. 
\end{align*}
(The third inequality holds by Proposition~\ref{uniformratio},
provided $\delta >0$ is sufficiently small and $x\geq 1$.)
Hence, by Proposition~\ref{proposition:oneBigJump1}, as $x \rightarrow
\infty$,
\begin{align*}
	E^{x}e^{- (1+\varepsilon )\nu u (x) /2}  \mathds{1} \{\sigma
	\not =\nu \} 
	&\leq o ( E^{x}e^{-(1+\varepsilon )\nu u (x) /2} ).
\end{align*}
Thus, to complete the proof we need only show that 
\[
	E^{x}e^{-(1+\varepsilon )\nu u (x) /2} =O (u (x)).
\]
But this follows routinely from the fact that $\nu $ is exponentially
distributed with rate $\lambda [x-2\delta x,\infty )=Cx^{-\alpha}$,
where $C= (1-2\delta)^{-\alpha}/\alpha$:
\[
	E^{x}e^{-(1+\varepsilon )\nu u (x) /2} = C/ (C+
	(1+\varepsilon) x^{\alpha}u (x)).
\]
The \emph{a priori} estimates \eqref{aprioriboundforu} now yield the
desired conclusion.
\end{proof}

\subsection{Proof of Theorem \ref{mainresult}}
%\begin{proof}[Proof of Theorem \ref{mainresult}]
Fix $\delta >0$ small and write $\sigma =\sigma_{(x-\delta x,x+\delta
x)}$ for the first exit time from the interval $(x-\delta x,x+\delta
x)$ and $\nu =\nu_{(-\infty ,-x+2\delta x)}$ for the time of the first
discontinuity of size $<-x+2\delta x$. By
Propositions~\ref{proposition:oneBigJump1}--\ref{proposition:oneBigJump2},
for any $\varepsilon >0$ there exists $\delta >0$ so small that
\begin{align*}
	u(x) &= E^x \exp \left\{ -\Psi_{\tau^{-} (0)}/2 \right\} \\
\notag 	 &\leq  (1-\varepsilon )^{-1} E^x \exp \left\{
-\Psi_{\sigma}/2\right\} \mathds{1} \{ \tau^{-} (0) =\sigma =\nu \}.  
\end{align*}
On the event $\{ \sigma=\nu =\tau^{-} (0) \}$, the path $X_{s}$ remains in
the interval $(x-\delta x,x+\delta x)$ up to time $\tau^{-} (0)$, so
by Proposition~\ref{uniformratio}
the path integral in the exponential is approximately $\nu  u (x)$:
more precisely, for any $\varepsilon >0$ there exists $\delta >0$ so
small that 
\begin{align}\label{eq:bounds}
	u (x) &\leq (1+\varepsilon ) E^{x} \exp \left\{- (1-\varepsilon )\nu  u (x)/2
	\right\} \mathds{1}  \{ \tau^{-} (0)=\sigma =\nu  \} \\
\notag 	&\leq (1+\varepsilon ) E^{x} \exp \left\{- (1-\varepsilon )\nu  u (x)/2
	\right\} \quad \text{and}\\
\notag 	u (x) &\geq (1-\varepsilon ) E^{x} \exp \left\{-(1+\varepsilon
	)\nu u (x)/2 \right\} \mathds{1} \{\tau^{-}(0)=\sigma =\nu
	\}. 
\end{align}
The expectation in the upper bound can be evaluated exactly, as in the
proof of Proposition~\ref{proposition:nu-est}, using the fact that
$\nu$ is exponentially distributed. This yields the inequality 
\begin{equation}\label{eq:upper}
	u (x)\leq \frac{(1+\varepsilon)x^{-\alpha}
	(1-2\delta)^{-\alpha}  /\alpha }{(1-\varepsilon)u 
	(x)/2+x^{-\alpha} (1-2\delta)^{-\alpha}/\alpha} .
\end{equation}

To obtain a usable lower bound from the last inequality in
\eqref{eq:bounds} we use Proposition~\ref{proposition:nu-est}. The
complement of the event $\{\tau^{-}(0)=\sigma =\nu \}$ is contained in
the union of $\{ \sigma \not =\nu \}$ with $\{X_{\sigma}\in (-2\delta
x,2\delta x) \}$. Proposition~\ref{proposition:nu-est} implies that
as $x \rightarrow   \infty$,
\[
	E^{x} \exp \left\{-(1+\varepsilon
	)\nu u (x)/2 \right\} \mathds{1} \{\sigma \not =\nu \}=o (u (x)),
\]
while Propositions~\ref{uniformratio} and
\ref{proposition:oneBigJump2} imply that for sufficiently small
$\delta >0$, if $x$ is large then
\begin{align*}
	&E^{x} \exp \left\{-(1+\varepsilon
	)\nu u (x)/2 \right\} \mathds{1} \{X_{\sigma}\in (-2\delta
	x,2\delta x) \} \\
 & \leq E ^{x} \exp \left\{-\Psi_{\sigma}/2
	\right\} \mathds{1} \{X_{\sigma}\in (-2\delta
	x,2\delta x) \} \\
&\leq\varepsilon u (x).
\end{align*}
Consequently,  for sufficiently small
$\delta >0$ and large $x$,
\begin{align*}
	&E^{x} \exp \left\{-(1+\varepsilon
	)\nu u (x)/2 \right\} \mathds{1} \{\tau^{-}(0)=\sigma =\nu
	\}\\
&\geq E^{x} \exp \left\{-(1+\varepsilon
	)\nu u (x)/2 \right\} -2\varepsilon u (x).
\end{align*}
The last expectation can now be evaluated, using once again the fact
that $\nu$ is exponentially distributed; this gives the lower bound
\begin{equation}\label{eq:lower}
	u (x)\geq \left( \frac{1}{1-2\varepsilon}\right)
	\frac{x^{-\alpha}
	(1-2\delta)^{-\alpha}  /\alpha }{(1+\varepsilon)u 
	(x)/2+x^{-\alpha} (1-2\delta)^{-\alpha}/\alpha}
\end{equation}

Since $\varepsilon >0$ and $\delta >0$ can be made arbitrarily small,
it now follows from the inequalities
\eqref{eq:upper}--\eqref{eq:lower} that 
\[
	\lim_{x \rightarrow \infty} x^\alpha (u(x))^2
	=\frac{2}{\alpha} .
\]
 Therefore, $$u(x) \sim \sqrt{\frac{2}{\alpha}}
\frac{1}{x^{\alpha /2}}.$$
\qed

\bigskip \noindent 
\textbf{Acknowledgment.} Thanks to Renming Song for pointing out a
number of minor errors in the original version.
 
%\end{proof}

%\addcontentsline{toc}{chapter}{References}

\bibliographystyle{plain}
\bibliography{main}

\end{document}